\theoremstyle{plain}
\newtheorem{thm}{Theorem}[section]
\newtheorem{prop}[thm]{Proposition}
\newtheorem{lem}[thm]{Lemma}
\newtheorem{cor}[thm]{Corollary}
\theoremstyle{definition}
\newtheorem{defn}[thm]{Definition}
\newcommand{\sm}{\raisebox{2.33pt}{~\rule{6.4pt}{1.3pt}~}}
\newcommand{\FRAC}[2]{\leavevmode\kern.1em\raise.5ex\hbox{\the\scriptfont0 #1}\kern-.1em/\kern-.15em\lower.25ex\hbox{\the\scriptfont0 #2}}
\newcommand{\dvr}{\mathcal{O}}
\newcommand{\mf}{\mathcal{F}}
\DeclareMathOperator{\dimtot}{dimtot}
\newcommand{\mh}{\mathcal{H}}
\DeclareMathOperator{\Spec}{Spec}
\DeclareMathOperator{\Proj}{Proj}
\begin{document}
\title
{Wild ramification determines
the characteristic cycle}
\author{Takeshi Saito and Yuri Yatagawa}
\date{}
\maketitle
%%%%%%%%%%%%%%%%%%%%%%%%%%%%%%%%%%%%%%%%%%%%%%%%%%%%%%%%%%%%%%%%%%%%%%%
\begin{abstract}
Constructible complexes 
have the same characteristic cycle
if they have the same wild ramification,
even if the characteristics
of the coefficients fields are different.
\end{abstract}
%%%%%%%%%%%%%%%%%%%%%%%%%%%%%%%%%%%%%%%%%%%%%%%%%%%%%%%%%%%%%%%%%%%%%%%

%\section*{Introduction}

The characteristic cycle $CC \mf$ of a constructible complex $\mf$ on a smooth variety $X$ over a perfect field $k$ is defined in
\cite{CC}, 
as a cycle on the cotangent bundle $T^{\ast}X$
supported on the singular support $SS \mf$ 
defined by Beilinson in \cite{be}.
The characteristic cycle is 
characterized by the Milnor formula 
recalled in Theorem \ref{thmmil} 
computing the total dimension of the space of vanishing cycles. 

We show that
constructible complexes 
have the same characteristic cycle
if they have the {\em same wild ramification}.
This terminology will be defined
in Definition \ref{dfP} in the text.

\begin{thm}\label{thml}
Let $X$ be a smooth scheme
over a perfect field $k$ %of characteristic $p$
and let $\Lambda$ and $\Lambda'$
be finite fields of
characteristic %$\ell$ and $\ell'$
invertible in $k$.
Let ${\cal F}$ and ${\cal F}'$
be constructible complexes
of $\Lambda$-modules
and 
of $\Lambda'$-modules
on $X$ respectively.
If ${\cal F}$ and ${\cal F}'$
have the {\em same wild ramification},
we have
\begin{equation}
CC{\cal F}=
CC{\cal F}'. 
\label{eqCCFF'}
\end{equation}
\end{thm}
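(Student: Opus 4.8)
\medskip
\noindent\textbf{Sketch of the proof.} The plan is to deduce \eqref{eqCCFF'} from the Milnor formula of Theorem~\ref{thmmil}, which characterises $CC\mf$ as the \emph{unique} $\mathbf{Z}$-cycle on $\cotx$ supported on the singular support $SS\mf$ whose intersection product with $df$ computes $-\dimtot$ of the vanishing cycles $\phi_{u}(h^{\ast}\mf,f)$, for every étale $h\colon W\to X$, every $f\colon W\to\ak$, and every isolated characteristic point $u$ of $f$. Since having the same wild ramification is stable under étale pullback (Definition~\ref{dfP}), it suffices to prove (i) $SS\mf=SS\mfp$ in $\cotx$, and (ii) $\dimtot\phi_{u}(h^{\ast}\mf,f)=\dimtot\phi_{u}(h^{\ast}\mfp,f)$ for every test datum $(W,h,f,u)$ as above. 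Granting (i) and (ii), the cycle $CC\mf$ is supported on $SS\mf=SS\mfp$ and satisfies the Milnor formula for $\mfp$ too, so the uniqueness in Theorem~\ref{thmmil} gives $CC\mf=CC\mfp$.

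For (i), I would use Beilinson's description of $SS\mf$ as the smallest closed conical subset $C\subseteq\cotx$ for which $f$ is locally acyclic relatively to $\mf$ whenever $df$ meets $C$ only over the point under consideration. Local acyclicity is detected, after pullback along a trait, by the ramification of $\mf$ along the closed fibre, and the data recorded by ``same wild ramification''---a common stratification on which $\mf$ and $\mfp$ are lisse of equal rank, together with equal ramification invariants of the pullbacks to traits---are precisely what is needed to make $f$ locally acyclic relatively to $\mf$ if and only if relatively to $\mfp$. Hence $\mf$ and $\mfp$ are micro-supported on the same conical subsets and $SS\mf=SS\mfp$.

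For (ii) I would argue by dévissage. Additivity of $CC$ and of $\dimtot\phi$ in distinguished triangles, compatibly with Definition~\ref{dfP}, together with induction on $\dim X$ to handle sheaves supported on closed subschemes, reduces to the case $\mf=j_{!}\mg$ for an open immersion $j\colon U\hookrightarrow X$ with dense image and a single locally constant constructible sheaf $\mg$, and similarly for $\mfp$. Next, invoking de Jong alterations and the compatibility of $CC$ (and hence of $\dimtot\phi$) with proper pushforward \cite{CC}, one reduces to the case where $D=X\setminus U$ is a simple normal crossings divisor and, after further blow-ups (cleanliness after blow-up in Abbes--Saito ramification theory), where $\mg$ and its counterpart on the $\mfp$ side are \emph{clean}. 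In the clean case $CC\mf$ has an explicit description: $(-1)^{d}$ ($d=\dim X$) times a cycle whose multiplicity along the zero section $\zers$ is $\rank\mg$, and whose other components---closures of the conormal bundles of the components $D_{i}$ of $D$ and of the subvarieties cut out by the refined Swan conductors $\rsw$ of $\mg$ along the $D_{i}$---occur with multiplicities that are explicit nonnegative integers built from $\rank\mg$, the Swan conductors, the $\rsw$, and intersection numbers of the $D_{i}$. Hence $\dimtot\phi_{u}(\mf,f)=(CC\mf,df)_{u}$ is given by a formula whose only inputs are the ranks of $\mg$ on the strata, the refined Swan conductors of $\mg$ along the $D_{i}$, and data depending on $f$ and $D$ alone. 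By Definition~\ref{dfP} the first two agree for $\mf$ and $\mfp$, and the formula is manifestly independent of the residue characteristic $\ell$ of the coefficient field; therefore $\dimtot\phi_{u}(\mf,f)=\dimtot\phi_{u}(\mfp,f)$, which is (ii).

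The main obstacle will be this last reduction: showing that the total dimension of vanishing cycles at an isolated characteristic point---a priori a subtle $\ell$-adic invariant---is computed by a formula in the $\ell$-independent ramification data of Definition~\ref{dfP}. This demands passing to a \emph{single} proper modification on which both $\mf$ and $\mfp$ become clean---legitimate because the hypothesis is preserved by pullback---and carefully tracking how $SS$, $CC$ and the ramification invariants transform under the blow-ups and generically finite alterations involved, with the pushforward compatibility of $CC$ itself resting on the Milnor formula. A further delicate point is to pin down the clean-case formula for $CC$ purely in terms of $\rank$ and $\rsw$---in particular, to show that the tame contribution depends only on ranks and not on the actual tame characters---and to check that nothing in the realisation of $CC$ through $\hf$ on $\dxx$ depends on $\ell$ beyond what $\rsw$ already records.
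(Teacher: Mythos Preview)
Your strategy diverges substantially from the paper's, and several of the reductions you rely on are either unavailable or not justified by Definition~\ref{dfP}.

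The paper never attempts to prove $SS\mf=SS\mfp$, nor to compare $\dimtot\phi_u$ directly through Abbes--Saito ramification theory, cleanliness, or an explicit clean-case formula for $CC$. Instead it proceeds in two short steps. First, Corollary~\ref{coruchi} shows that ``same wild ramification'' implies \emph{universally the same Euler--Poincar\'e characteristics} (Definition~\ref{dfEP}): this is Proposition~\ref{prl}, proved by Illusie's Brauer-trace computation (Lemma~\ref{prsig}) expressing $\chi_c(X,\mf)$ entirely in terms of the integers $\dim M^\sigma$ appearing in Definition~\ref{dfP}. Second, Proposition~\ref{prccCC} shows that universally equal Euler characteristics already force $CC\mf=CC\mfp$: one compactifies, chooses a good pencil $X_L\to L$ (Lemma~\ref{goodpencil}) so that each irreducible component of $C=SS\mf\cup SS\mfp$ is hit at an isolated characteristic point lying over a distinct point of $L$, and then recovers $\dimtot\phi_x$ as the Artin conductor $a_y Rp_{L*}\pi_L^*\mf$, which in turn is determined by Euler characteristics of pullbacks via the Grothendieck--Ogg--Shafarevich formula (Lemma~\ref{lmax}). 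No equality of singular supports is needed because Theorem~\ref{thmmil} allows one to work on the common $C$.

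Regarding your outline: step~(i) is both unnecessary and not rigorously argued---local acyclicity is not ``detected after pullback to a trait'' in any way that Definition~\ref{dfP} directly controls. More seriously, in step~(ii) the reduction to a clean situation by blow-ups is not known in general for higher-rank sheaves (it amounts to resolution of non-logarithmic ramification), and the compatibility of $CC$ with proper pushforward you invoke is itself a hard theorem one would rather avoid here. Finally, your claim that ``by Definition~\ref{dfP} the refined Swan conductors agree'' is not justified: Definition~\ref{dfP} records only the \emph{rational} character of the wild inertia (equivalently the numbers $\dim e_iM$ of Lemma~\ref{lmsM}), which does not pin down $\rsw$ as a differential form. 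What is true is that the resulting characteristic cycles coincide, but establishing that directly from the clean-case formula would require exactly the kind of $\ell$-independence argument the paper supplies through Euler characteristics. The paper's route is both shorter and avoids all of these obstacles.
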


A special case where $\Lambda=\Lambda'$
is proved in the thesis of the
second named author \cite[Theorem 7.25]{Y}.
Theorem \ref{thml} is a refinement of
and is deduced from 
the following equality of Euler characteristic.

\begin{prop}[{\rm cf.\ \cite[Th\'eor\`eme 2.1]{il}}]\label{prl}
Let $X$ be a 
separated scheme 
of finite type over an
algebraically closed field $k$ and
let $\Lambda$ and $\Lambda'$
be finite fields of
characteristic %$\ell$ and $\ell'$
invertible in $k$.
Let ${\cal F}$ and ${\cal F}'$
be constructible complexes
of $\Lambda$-modules
and 
of $\Lambda'$-modules
on $X$ respectively.
If ${\cal F}$ and ${\cal F}'$
have the {\em same wild ramification},
we have
\begin{equation}
\chi_c(X,{\cal F})=
\chi_c(X,{\cal F}').
\label{eqchi}
\end{equation}
\end{prop}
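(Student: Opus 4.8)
Following the classical method of \cite{il}, the plan is an induction on $d=\dim X$ whose inductive step reduces to curves by means of Artin's elementary fibrations, the arithmetic input being the Grothendieck--Ogg--Shafarevich formula repackaged as a statement on the wild ramification of a cohomological direct image. The crux is the following assertion. \emph{Key claim:} if $f\colon X\to S$ is an elementary fibration with $S$ smooth over $k$, and $\mathcal F$, $\mathcal F'$ are constructible complexes of $\Lambda$- and $\Lambda'$-modules on $X$, respectively, having the same wild ramification (Definition \ref{dfP}), then $Rf_!\mathcal F$ and $Rf_!\mathcal F'$ have the same wild ramification on $S$. In the extreme case $\dim S=0$ this is exactly the one-dimensional case of the Proposition: $Rf_!\mathcal F$ is then the complex $R\Gamma_c(X,\mathcal F)$ on a point, "same wild ramification" there means equality of rank, and the statement is the Grothendieck--Ogg--Shafarevich formula $\chi_c(X,\mathcal F)=\chi_c(X,\mathcal F')$.

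\textbf{Reduction to the Key claim.} Additivity of $\chi_c$ over a stratification of $X$ into smooth locally closed subschemes, together with the fact that having the same wild ramification is inherited by restriction to a locally closed subscheme, lets us treat each stratum separately; the strata of dimension $<d$ are covered by the inductive hypothesis, so we may assume $X$ is smooth of pure dimension $d$. If $d=0$, then $X$ is a finite set of closed points and $\chi_c(X,\mathcal F)=\sum_{x}\sum_q(-1)^q\dim_\Lambda\mathcal H^q(\mathcal F)_{\bar x}$ agrees with the corresponding sum for $\mathcal F'$, since at a closed point having the same wild ramification amounts to equality of these alternating ranks. If $d\ge 1$, by Artin's theorem $X$ contains a dense open $U$ carrying an elementary fibration $f\colon U\to S$ with $S$ smooth of dimension $d-1$; the complement $X\setminus U$ has dimension $<d$ and is handled by the induction, and for $U$ we have $\chi_c(U,\mathcal F)=\chi_c(S,Rf_!\mathcal F)$ and likewise for $\mathcal F'$. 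By the Key claim $Rf_!\mathcal F$ and $Rf_!\mathcal F'$ have the same wild ramification on $S$, so the inductive hypothesis on $S$ gives $\chi_c(S,Rf_!\mathcal F)=\chi_c(S,Rf_!\mathcal F')$, and \eqref{eqchi} follows.

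\textbf{Toward the Key claim.} One argues \'etale-locally on $S$ using the smooth projective compactification $\bar f\colon\bar X\to S$ of $f$, whose boundary $Z=\bar X\setminus X$ is finite \'etale over $S$. By proper base change the stalks of $Rf_!\mathcal F$ compute the compactly supported cohomology of the fibre curves, and the wild ramification of $Rf_!\mathcal F$ along a divisor of $S$ is to be read off, through the vanishing cycles of $\bar f$, from the ramification of $\mathcal F$ along the corresponding special fibres and the Swan conductors of $\mathcal F$ along $Z$ relative to $S$; this is the relative form of the Grothendieck--Ogg--Shafarevich formula, in the spirit of the Euler--Poincar\'e and conductor formulas of Deligne and Laumon for relative curves. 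Each ingredient here that depends on the coefficient complex --- ranks along strata, Swan conductors, and the finer ramification data along divisors --- is a wild-ramification invariant in the sense of Definition \ref{dfP} and hence coincides for $\mathcal F$ and $\mathcal F'$; the remaining ingredients are geometric invariants of $\bar X/S$ and $Z/S$, independent of the coefficients.

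\textbf{Main obstacle.} The genuine difficulty is the Key claim in the cross-characteristic setting: when $\Lambda$ and $\Lambda'$ have different characteristics there is no comparison of $\mathcal F$ and $\mathcal F'$ as sheaves, so the analysis of $Rf_!$ must proceed entirely through numerical ramification invariants --- ranks, Swan conductors, breaks in the upper-numbering (Abbes--Saito) filtration, refined Swan conductors --- and one must verify that the passage from $\mathcal F$ to $Rf_!\mathcal F$ records only such invariants, precisely the ones that Definition \ref{dfP} matches. Once the Key claim is established the induction closes and \eqref{eqchi} holds.
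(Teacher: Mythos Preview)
Your proposal takes a genuinely different route from the paper, and it has a real gap. You reduce the proposition to the Key claim---that $Rf_!$ along an elementary fibration preserves the condition ``same wild ramification'' of Definition~\ref{dfP}---but you do not prove it; the paragraph ``Toward the Key claim'' only lists ingredients, and you yourself flag the cross-characteristic comparison as the obstacle. This is not a minor lacuna: the Key claim is at least as strong as the proposition (it asserts preservation of the full condition in Definition~\ref{dfP}, not just of Euler characteristics), and proving it would require precisely the kind of $\ell$-independent control over the inertia action on the stalks of $Rf_!\mathcal F$ that the proposition is meant to supply. Even matching the \emph{generic} rank of $Rf_!\mathcal F$ with that of $Rf_!\mathcal F'$ already encodes the one-dimensional case of the proposition on the fibers.

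The paper avoids this entirely and does not argue by induction on $\dim X$ or by fibrations. After the same devissage to $X$ normal and affine with $\mathcal F,\mathcal F'$ locally constant, it chooses a $G$-torsor $W\to X$ trivializing both and proves (Lemma~\ref{prsig}) the trace formula
\[
\chi_c(X,\mathcal F)=\frac1{|G|}\sum_{\sigma\in S}{\rm Tr}\bigl(\sigma: H^*_c(W,\mathbf{Z}_\ell)\bigr)\cdot\frac1{p-1}\bigl(p\dim M^\sigma-\dim M^{\sigma^p}\bigr),
\]
where $S\subset G$ is the set of $p$-power-order elements lying in an inertia group of some compactification $\overline X$, and $M$ is the $G$-module corresponding to $\mathcal F$. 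The second factor arises by rewriting the Brauer trace ${\rm Tr}^{Br}(\sigma,M)$ via Lemma~\ref{lmsig}; it depends only on the numbers $\dim M^\sigma$, which Definition~\ref{dfP} matches between $M$ and $M'$. The first factor is an integer independent of $\ell$ by \cite{DL}, and vanishes for $\sigma\notin S$ by \cite[Lemme~2.5]{il}. Hence the right-hand side is visibly the same for $\mathcal F$ and $\mathcal F'$, and \eqref{eqchi} follows directly---no relative statement about $Rf_!$ is needed.
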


A special case where $\Lambda=\Lambda'$
is proved in \cite[Th\'eor\`eme 2.1]{il}.

To deduce Theorem \ref{thml}
from Proposition \ref{prl},
we take a morphism to a curve
and use the Grothendieck-Ogg-Shafarevich
formula to recover the
total dimension of the space
of vanishing cycles appearing in
the characterization of
characteristic cycle
from the Euler-Poincar\'e
characteristic.

We briefly recall the definition
and properties of singular support and characteristic cycle in Section 1.
As preliminaries of proof of Theorem \ref{thml},
we prove the existence of a good pencil
in Section 2.
We show that the 
characteristic cycle of a sheaf 
is determined by the Euler-Poincar\'e
characteristics of its pull-backs
using the existence of a good pencil
in Section 3.
Finally, we prove
Theorem \ref{thml} 
after defining the condition for
constructible complexes to have
the same wild ramification
in Section 4.

%{\color{red}
The authors thank Alexander Beilinson
for suggesting weakening the 
assumption in the main result
and also for an interpretation of
the equality (\ref{eqBr}) in Definition \ref{dfP}
using connected components
of the center of a group algebra.
The research was partially supported
by JSPS Grants-in-Aid 
for Scientific Research
(A) 26247002, JSPS KAKENHI Grant Number 15J03851, and the Program for Leading Graduate Schools, MEXT, Japan.
A part of this article is written during
the stay of one of the authors (T.\! S.)
at IHES. He thanks Ahmed Abbes for
the hospitality.
%acknowledgement:

%\tableofcontents
%%%%%%%%%%%%%%%%%%%%%%%%%%%%%%%%%%%%%%%%%%%%%%%%%%%%%%%%%%%%%%%%%%%%%%
\section{Characteristic cycle}
\label{sccss}
We briefly recall the definition of characteristic cycle.
We refer to \cite{CC} for more detail.
For a smooth scheme $X$ 
over a field $k$,
let  $T^{\ast}X=\Spec S^{\bullet}\Omega^{1 \vee}_{X}$ be the cotangent bundle of $X$
and let $T^{\ast}_{X}X$ denote the zero section.
A morphism $f\colon X\to Y$ of smooth schemes over $k$ induces
a linear mapping
$df\colon X\times_YT^{\ast}Y
\rightarrow T^{\ast}X$ of vector bundles
on $X$.
We say that a closed subset $C$ of a vector bundle is \textit{conical}
if $C$ is stable under the action
by the multiplicative group.

\begin{defn}[{\cite[1.2]{be}}]\label{df1.1}
%,\cite[Definition 2.2, Definition 2.4]{CC}}]
Let $X$ be a smooth scheme over a field $k$
and let $C\subset T^{\ast}X$ be a closed conical subset.

{\rm 1.}
Let $h\colon W\rightarrow X$ be a morphism of smooth schemes over $k$.
We say that $h$ is $C$-\textit{transversal} if we have
\begin{equation}
dh^{-1}(T^{\ast}_{W}W)\cap h^{\ast}C\subset 
W\times_XT^{\ast}_{X}X, \notag
\end{equation}
where $h^*C=W\times_XC$.

For a $C$-transversal morphism $h\colon W\rightarrow X$, we define a closed conical subset
$h^{\circ}C\subset T^{\ast}W$ to be the image
of $h^{\ast}C\subset W\times_XT^*X$
by the morphism $dh\colon W\times_XT^*X
\to T^{\ast}W$.

{\rm 2.}
Let $f\colon X\rightarrow Y$ be a morphism of smooth schemes over $k$.
We say that $f$ is $C$-\textit{transversal} if we have
\begin{equation}
df^{-1}(C)\subset X\times_YT^{\ast}_{Y}Y. \notag
\end{equation}

{\rm 3.}
Let $h\colon W\to X$
and $f\colon W\to Y$ be 
morphisms of smooth schemes over $k$.
We say that the pair $(h,f)$ is $C$-\textit{transversal} if $h$ is $C$-transversal
and if $f$ is $h^{\circ}C$-transversal. 

{\rm 4.}
Let $j\colon U\to X$ be an 
\'etale morphism,
$f\colon U\to Y$ 
a morphism over $k$ 
to a smooth curve over $k$,
and $u\in U$ a closed point.
We say that $u$
is an \textit{isolated characteristic point}
with respect to $C$ if the pair 
$(j,f)$ is not $C$-transversal
and its restriction to
$U\sm\{u\}$ is $C$-transversal.
\end{defn}

Let $\Lambda$ be a finite field 
of characteristic $\ell$ invertible in $k$.
We say that a complex $\mf$ of \'{e}tale sheaves of $\Lambda$-modules on $X$ is \textit{constructible}
if the cohomology sheaf $\mh^{q}(\mf)$ is constructible for every $q$ and
if $\mh^{q}(\mf)=0$ except finitely many $q$.

\begin{defn}[{\cite[1.3]{be}}]
\label{defss}
Let $X$ be a smooth scheme over a field $k$
and let $\Lambda$ be a finite field 
of characteristic $\ell$ invertible in $k$.
Let $\mf$ be a constructible complex of $\Lambda$-modules on $X$.

{\rm 1.}
Let $C\subset T^{\ast}X$ be a closed conical subset.
We say that $\mf$ is \textit{micro-supported} on $C$ if
for every $C$-transversal pair $(h,f)$
of morphisms
$h\colon W\to X$ and $f\colon W\to Y$ of smooth schemes over $k$, 
the morphism $f$ is locally acyclic relatively to $h^{\ast}\mf$.

{\rm 2.}
The \textit{singular support} $SS \mf$ of $\mf$
is the smallest closed conical subset $C$ of $T^{\ast}X$ on which
$\mf$ is micro-supported.
\end{defn}

By \cite[Theorem 1.3]{be}, the singular support exists for every
constructible complex of $\Lambda$-modules.
Further, if  $X$ is equidimensional of dimension $n$, the singular support is equidimensional of
dimension $n$.

\begin{thm}[Milnor formula, {\cite[Theorem 5.9, Theorem 5.18]{CC}}]
\label{thmmil}
Let $X$ be a smooth scheme
equidimensional of dimension $n$ over a 
perfect field $k$
and let $\Lambda$ be a finite field 
of characteristic $\ell$ invertible in $k$.
Let $\mf$ be a constructible complex of $\Lambda$-modules on $X$
and $C\subset T^*X$
a closed conical subset.
Assume that ${\cal F}$
is micro-supported on $C$
and that every irreducible components $C_a$
of $C=\bigcup_aC_a$
is of dimension $n$.

Then, there exists a unique 
$\mathbf{Z}$-linear combination 
$A=\sum_{a}m_{a}C_{a}$ satisfying
the following condition:
Let $(j,f)$ be the pair of an
\'etale morphism $j\colon U\to X$ and
a morphism $f\colon U\to Y$ over $k$
to a smooth curve over $k$.
Let $u \in U$ be a closed point
such that $u$ is at most an isolated characteristic point of $f$
with respect to $C$.
Then we have
\begin{equation}
-\dimtot \phi_u(j^{\ast}\mf,f)=
(j^{\ast}A,df)_{T^{\ast}U,u}. 
\label{eqM}
\end{equation}

Further $A$ is independent of $C$
on which $\mf$ is micro-supported.
\end{thm}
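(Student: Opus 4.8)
The plan is to treat separately the uniqueness of $A$, which also yields its independence of $C$ and is essentially a formal and local matter, and the existence of $A$, which is the real content of \cite{CC}.

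\medskip
\noindent\emph{Uniqueness and independence of $C$.}
Since $SS\mf$ is by definition the smallest closed conical subset on which $\mf$ is micro-supported and, by \cite{be}, is equidimensional of dimension $n$, each component of $SS\mf$ occurs among the $C_a$; one may therefore prove the statement with $C$ replaced by $SS\mf$, and the cycle so obtained does not depend on the original $C$, because a component $C_a$ of $C$ not contained in $SS\mf$ lies over a locus on which $\mf$ is locally acyclic and so is forced to receive coefficient $0$. Now let $A=\sum_a m_aC_a$ and $A'=\sum_a m'_aC_a$ both satisfy (\ref{eqM}) with $C=SS\mf$. Since the right-hand side of (\ref{eqM}) is $\BZ$-linear in the cycle, it suffices to produce, for each index $a$, a triple $(j,f,u)$ as in the theorem with $(j^{\ast}C_a,df)_{T^{\ast}U,u}=1$ and $(j^{\ast}C_b,df)_{T^{\ast}U,u}=0$ for every $b\neq a$; then (\ref{eqM}) forces $m_a=-\dimtot\phi_u(j^{\ast}\mf,f)=m'_a$. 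By Beilinson's theorem \cite{be} the singular support is Lagrangian, so each $C_a$ is the closure of the conormal bundle of an irreducible closed subvariety $Z_a\subset X$, and distinct components have distinct bases. Choose a closed point $x$ in the smooth locus of $Z_a$ lying in no $Z_b$ that fails to contain $Z_a$ (possible since $Z_a$ meets each such $Z_b$ in a proper closed subset); then for every $b\neq a$ the fibre $C_b|_x$ is contained in a proper linear subspace of the fibre $C_a|_x$. In an \'etale neighbourhood of $x$ choose a sufficiently general morphism $f\colon U\to Y$ to a smooth curve $Y$ whose restriction to $Z_a$ has a nondegenerate critical point at $x$ and whose differential at $x$ is a generic conormal vector to $Z_a$. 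Then $u=x$ is an isolated characteristic point; the section of $T^{\ast}U$ obtained by applying $df$ to a local generator of $\Omega^{1}_{Y}$ near $f(x)$ meets $j^{\ast}C_a$ transversally in a single point over $x$ and misses $j^{\ast}C_b$ near $x$ for $b\neq a$; hence $m_a=m'_a$.

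\medskip
\noindent\emph{Existence.}
First, a d\'evissage: both sides of (\ref{eqM}) are additive along distinguished triangles — the total dimension of the space of vanishing cycles and the local intersection number are — and every constructible complex is built from its cohomology sheaves by iterated triangles, so, after enlarging $C$ to a common closed conical subset containing the singular supports of all the pieces, we reduce to the case that $\mf$ is a single constructible sheaf, which we may further take to be lisse outside a divisor. When $n\le 1$ the cycle $A$ is the classical one assembled from the generic rank and the Swan conductors, and (\ref{eqM}) is the Deligne--Laumon formula for the dimension of the space of vanishing cycles. When $n\ge 2$ I would take a generic Lefschetz pencil of hyperplane sections, blow up its axis to obtain $\pi\colon\tilde X\to X$ and $p\colon\tilde X\to\BP^{1}$, and use that $\pi$ is transversal to $C$ and that characteristic cycles are compatible with such pull-backs and with proper push-forward: the vanishing cycles of $\mf$ along an auxiliary test function then become expressible through those of $\pi^{\ast}\mf$ along $p$ together with vanishing cycles along functions on the smooth fibres $X_t$, which have dimension $n-1$. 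The inductive hypothesis provides $CC$ on the fibres $X_t$; pushing forward along the pencil yields a candidate cycle $A$ on $X$, and one then verifies (\ref{eqM}) at an arbitrary isolated characteristic point by a local computation which, via a suitable pencil or projection, is reduced to the inductively known $(n-1)$-dimensional situation. By the uniqueness already proved, this $A$ is the one named in the theorem.

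\medskip
\noindent\emph{Main obstacle.}
The hard part is exactly this inductive step for $n\ge 2$. One needs to know that at an isolated characteristic point the complex of vanishing cycles is concentrated in a single degree, so that ``total dimension'' is compatible with the d\'evissage and with the reduction through a pencil; one needs to control how nearby and vanishing cycles degenerate as the pencil parameter specializes; and one needs to verify that the construction does not depend on the chosen pencil. These local and geometric inputs, which constitute the real content of \cite{CC}, are considerably deeper than the formal reductions and the uniqueness argument sketched above.
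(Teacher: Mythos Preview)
The paper does not prove Theorem~\ref{thmmil}: it is quoted from \cite[Theorems~5.9, 5.18]{CC} and used as input, with only an explanation of the notation following the statement. So there is no argument in the paper to compare your proposal against; what one can do is compare your sketch to the actual argument in \cite{CC} and to the pencil machinery developed in Section~2 of this paper, which is closely related to the uniqueness part.

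Your uniqueness argument has a genuine gap. You assert that ``by Beilinson's theorem the singular support is Lagrangian, so each $C_a$ is the closure of the conormal bundle of an irreducible closed subvariety $Z_a\subset X$''. Beilinson proves in \cite{be} that $SS\mf$ is equidimensional of dimension $n$; he does not prove it is Lagrangian, and in positive characteristic irreducible components of the singular support need not be conormal varieties. Your construction of the isolating test function $(j,f,u)$ rests on picking a generic point of the smooth locus of the base $Z_a$ and a function with a nondegenerate critical point there; without the conormal description this geometry is unavailable. The route actually taken in \cite{CC}, and reflected in Lemma~\ref{goodpencil} here, is different: one embeds $X$ projectively, passes to the projectivization ${\mathbf P}(\widetilde C)\subset X\times_{\mathbf P}Q$, and uses the dual variety picture. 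Condition~(C) plus \cite[Corollary~3.21]{CC} guarantee that for a generic pencil $L$ each $X_L\cap{\mathbf P}(\widetilde C_a)$ is nonempty and disjoint from $X_L\cap{\mathbf P}(\widetilde C_b)$ for $b\neq a$, so the isolated characteristic points of $p_L$ separate the components with strictly positive intersection numbers. This yields uniqueness (and independence of $C$) without ever invoking a conormal structure on the $C_a$.

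Your existence sketch is in the right spirit but, as you say yourself, the substance is in \cite{CC}: the Radon transform and the compatibilities of $CC$ with smooth pull-back and proper push-forward that make the pencil reduction go through. What you wrote is an outline of how one would like the induction to run, not an argument; and since the paper under review simply cites the result, there is nothing further to compare.
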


In (\ref{eqM}),
the left hand side denotes
the minus of the total dimension of the stalk $\phi_u(j^{\ast}\mf,f)$ 
at $u$ of the complex of vanishing cycles.
The total dimension $\dimtot$ is defined
as the sum of the dimension and
the Swan conductor.
The right hand side denotes the intersection number 
supported on the fiber of $u$
of the pull-back $j^{\ast}A$ 
with the section $df$ defined to be the pull-back of 
$dt$ for a local coordinate $t$ of $Y$
at $f(u)$.

\begin{defn}[{\cite[Definition 5.10]{CC}}]
\label{defcc}
Let $X$ be a smooth scheme over 
a perfect field $k$
and let $\Lambda$ be a finite field 
of characteristic $\ell$ invertible in $k$.
Let $\mf$ be a constructible complex of $\Lambda$-modules on $X$.
We define the {\em characteristic cycle} $CC \mf$ of $\mf$ to be $A=\sum_am_aC_a$ in Theorem {\rm \ref{thmmil}}.
\end{defn}

For ${\mathbf Z}_\ell$-coefficient
or ${\mathbf Q}_\ell$-coefficient,
the characteristic cycle
is defined by taking the reduction modulo $\ell$.
Theorem \ref{thmmil} implies
the following additivity of characteristic cycles.
For a distinguished triangle
$\to {\cal F}'\to {\cal F}\to {\cal F}''\to$
of constructible complexes of
$\Lambda$-modules,
we have
\begin{equation}
CC{\cal F}=CC{\cal F}'+CC{\cal F}''.
\label{eqCC+}
\end{equation}

%%%%%%%%%%%%%%%%%%%%%%%%%%%%%%%%%%%%%%%%%%%%%%%%%%%%%%%%%%%%%%%%%%%%%%
\section{Existence of good pencil}

Let $X$ be a smooth projective scheme over an algebraically closed field $k$.
Let $\mathcal{L}$ be a very ample invertible $\dvr_{X}$-module
and 
let $E\subset \Gamma (X,\mathcal{L})$ be a sub $k$-vector space defining
a closed immersion 
$i\colon X\rightarrow \mathbf{P}=
\mathbf{P}(E^\vee)=\Proj_{k}S^{\bullet}E$. 
The dual projective space $\mathbf{P}^{\vee}
=\mathbf{P}(E)$ parametrizes hyperplanes in $\mathbf{P}$.
We identify the universal hyperplane
$Q=\{(x,H)\mid  x\in H\}\subset \mathbf{P}\times_{k}\mathbf{P}^{\vee}$
with the covariant projective space bundle 
$\mathbf{P}(T^{\ast}\mathbf{P})$ as in the beginning of \cite[Subsection 3.2]{CC}.
We also identify the universal family of
hyperplane sections
$X\times_{\mathbf{P}}Q$
with
$\mathbf{P}(X\times_{\mathbf{P}}T^{\ast}\mathbf{P})$.

For a line $L \subset \mathbf{P}^{\vee}$, let $A_{L}$ denote the axis $\bigcap_{t\in L}H_{t}$ of $L$.
Define $p_L\colon 
X_{L}=\{(x,H_{t})\mid x\in X\cap H_{t},\
t\in L\}\to L$ by the cartesian
diagram
\begin{equation}
\begin{CD}
X_L@>>>X\times_{\mathbf{P}}Q\\
@V{p_L}VV@VVV\\
L@>>>{\mathbf P}^\vee.
\end{CD}
\label{eqpL}
\end{equation}
The projection
$\pi_L\colon X_{L}\rightarrow X$
induces an isomorphism
on the complement
$X_{L}^{\circ}=X\sm X\cap A_{L}$.
Let $p_{L}^{\circ}\colon
X_{L}^{\circ}\rightarrow L$ be the restriction of $p_{L}$.
We note that if $A_{L}$ meets $X$ transversally then
$\pi_L\colon X_{L}\rightarrow X$ is the blow-up of $X$ along $X\cap A_{L}$ and hence $X_{L}$ is smooth over $k$.

Let $C\subset T^{\ast}X$ be a closed conical subset %{\color{red}
and let $\widetilde C$ be the inverse image
by the surjection
$di \colon X\times_{\mathbf{P}}T^{\ast}\mathbf{P}\rightarrow T^{\ast}X$. %}
We consider the following conditions.

\begin{itemize}
\item[(E)] For every pair $(u,v)$ of distinct closed points of $X$,
the restriction
\begin{equation}
E\subset \Gamma(X,\mathcal{L})\rightarrow \mathcal{L}_{u}/\mathfrak{m}_{u}^{2}\mathcal{L}_{u}\oplus \mathcal{L}_{v}/\mathfrak{m}_{v}^{2}\mathcal{L}_{v} \notag
\end{equation}
is surjective.
\item[(C)] 
For every irreducible component $C_{a}$ of $C$,
the inverse image $\widetilde C_a
%{\color{red}
\subset \widetilde C%}
$ of $C_{a}$ by the surjection
$di \colon X\times_{\mathbf{P}}T^{\ast}\mathbf{P}\rightarrow T^{\ast}X$
is not contained in the $0$-section
$X\times_{\mathbf{P}}T^{\ast}_{\mathbf{P}}\mathbf{P}$.
\end{itemize}
If every irreducible component
of $C_a$ has the same dimension 
as that of $X$,
the condition (C) is satisfied 
unless $X={\mathbf P}$.
Hence, by \cite[Lemma 3.19]{CC}, 
after replacing ${\cal L}$ and $E$
by ${\cal L}^{\otimes n}$
and the image
$E^{(n)}$ of
$E^{\otimes n}\to
\Gamma(X,{\cal L}^{\otimes n})$
%of the symmetric power $S^nE$
for $n\geqq 3$ if necessary,
the condition (E) and (C) are satisfied
%{\color{red} 
if every irreducible component $C_{a}$ of $C
=\bigcup_aC_a$ is of dimension $\dim X$. %}.
For each irreducible component $C_{a}$ of $C$,
we regard
the projectivization
$\mathbf{P}(\widetilde C_{a})$
of 
$\widetilde C_a\subset
\widetilde C$
as a closed subset of
$\mathbf{P}(\widetilde C)
\subset
\mathbf{P}(X\times_{\mathbf{P}}T^*\mathbf{P})
=
X\times_{\mathbf{P}}Q$.

\begin{lem}\label{lem}
Assume that 
the axis $A_{L}$ meets $X$ transversally
and that the immersion
$Z
=X\cap A_{L}\rightarrow X$
is $C$-transversal.

{\rm 1.}
The blow-up $\pi_L\colon X_{L}\rightarrow X$ is 
$C$-transversal.

{\rm 2.}
The intersection $X_L\cap {\mathbf P}(\widetilde
C)$ in $\mathbf{P}(X\times_{\mathbf{P}}T^*\mathbf{P})
=
X\times_{\mathbf{P}}Q$
is the smallest closed subset
outside of which
the projection $p_L\colon X_L\to L$
is $\pi_L^\circ C$-transversal.
\end{lem}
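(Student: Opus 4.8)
The plan is to translate each assertion into a statement about the incidence correspondences already set up, and to compute $dp_L$ and $d\pi_L$ explicitly using the blow-up structure.

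For part 1, since $A_L$ meets $X$ transversally, $\pi_L\colon X_L\to X$ is the blow-up along the smooth center $Z=X\cap A_L$, so $X_L$ is smooth. The morphism $\pi_L$ is an isomorphism over $X_L^\circ$, hence trivially $C$-transversal there, and the only issue is along the exceptional divisor $E_L=\pi_L^{-1}(Z)$. First I would recall that, because $Z\to X$ is $C$-transversal and $A_L$ meets $X$ transversally, for a point $w\in E_L$ lying over $z\in Z$ the conormal direction picked out by the blow-up lies in the image of $d\pi_L$ only in the zero direction of $\pi_L^*C$; concretely, $d\pi_L^{-1}(T^*_{X_L}X_L)\cap \pi_L^*C$ at $w$ consists of those covectors in $C_z$ that annihilate the tangent space of the fiber of $E_L\to Z$, i.e.\ that come from $T^*_ZX$ inside $T^*_zX$. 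The $C$-transversality of the immersion $Z\hookrightarrow X$ says exactly that $C\cap T^*_ZX \subset$ zero section along $Z$, so this intersection is contained in the zero section of $X_L\times_X T^*X$, which is the condition $dh^{-1}(T^*_WW)\cap h^*C\subset W\times_X T^*_XX$ of Definition \ref{df1.1}.1. Hence $\pi_L$ is $C$-transversal, and $\pi_L^\circ C$ is well-defined as the image of $\pi_L^*C$ under $d\pi_L$.

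For part 2, by Definition \ref{df1.1}.2 applied to $p_L\colon X_L\to L$ with $L$ a curve, the non-$\pi_L^\circ C$-transversal locus is $\{ dp_L^{-1}(\pi_L^\circ C)\not\subset X_L\times_L T^*_LL\}$, i.e.\ the set of $w\in X_L$ at which some nonzero covector of $(\pi_L^\circ C)_w\subset T^*_wX_L$ lies in the image of $dp_L\colon X_L\times_L T^*L\to T^*X_L$. Over the open part $X_L^\circ$, where $\pi_L$ is an isomorphism and $X_L^\circ\to L$ is identified with the restriction of the projection $X\times_{\mathbf P}Q\to \mathbf P^\vee$, the covector $dp_L$ produces at $w\mapsto x$ is precisely $di$ applied to the hyperplane direction; so the locus over $X_L^\circ$ is exactly the set of $(x,H_t)$ with $x\in X\cap H_t$ such that the image under $di$ of the conormal direction of $H_t$ at $x$ meets $C_x$ nontrivially — and by the identification of $Q$ with $\mathbf P(T^*\mathbf P)$, this is by definition $X_L^\circ\cap \mathbf P(\widetilde C)$. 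Over the exceptional divisor $E_L$ one checks, using part 1 together with the assumption that $A_L$ meets $X$ transversally (so $\widetilde C$ has no component supported on the zero section by condition (C)'s style reasoning, or simply by the transversality of $Z$), that no new points appear: the $\pi_L^\circ C$-transversality along $E_L$ follows from the $C$-transversality of $Z\to X$. This exhibits $X_L\cap\mathbf P(\widetilde C)$ as the non-transversal locus; its minimality/closedness is automatic since $\mathbf P(\widetilde C)$ is closed and $p_L$ is $\pi_L^\circ C$-transversal precisely off it.

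The main obstacle will be the bookkeeping along the exceptional divisor $E_L$ in both parts: one must identify the fiber of $d\pi_L$ and of $dp_L$ at a point of $E_L$ and verify that the transversality hypotheses on $Z\hookrightarrow X$ and on $A_L$ genuinely force the absence of spurious characteristic points there, rather than merely on $X_L^\circ$ where everything is a clean pullback along an isomorphism. I expect this to be handled by a local computation in blow-up coordinates, using that $T^*_ZX$ is exactly the conormal bundle generated by the directions contracted by $\pi_L$, and invoking \cite[Lemma 3.19]{CC}-type transversality of the axis to control $\widetilde C$ near the zero section. The open-locus identification with $\mathbf P(\widetilde C)$, by contrast, is essentially a restatement of the identifications of $Q$ and $X\times_{\mathbf P}Q$ with the projectivized cotangent bundles made just before the lemma.
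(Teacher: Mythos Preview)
Your Part 1 argument is essentially the paper's: both show that along the exceptional divisor $D=Z\times_X X_L$, the kernel of $d\pi_L\colon D\times_X T^*X \to D\times_{X_L}T^*X_L$ lands inside the conormal bundle $T^*_ZX$, whence the $C$-transversality of $Z\hookrightarrow X$ forces $C$-transversality of $\pi_L$. The paper packages this as a commutative square of vector bundles on $D$ (with $T^*Z$ and $T^*D$ in the right column, using that $D\to Z$ is smooth to get injectivity on the right); you phrase the same kernel inclusion more concretely. The content is identical.

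For Part 2 the routes genuinely differ. You split $X_L$ into the open locus $X_L^\circ$, where $\pi_L$ is an isomorphism and the identification of the non-transversal set with $\mathbf P(\widetilde C)$ is a direct unwinding of definitions, and the exceptional divisor $E_L$, where you defer to a local blow-up computation showing simultaneously that $p_L$ is $\pi_L^\circ C$-transversal there and that $E_L\cap\mathbf P(\widetilde C)=\emptyset$. The paper instead treats both loci at once: since $p\colon X\times_{\mathbf P}Q\to X$ is smooth, the immersion $X_L\hookrightarrow X\times_{\mathbf P}Q$ is $p^\circ C$-transversal by \cite[Lemma 3.4.3]{CC}; then \cite[Lemma 3.9.1]{CC} applied to the cartesian square with rows $X\times_{\mathbf P}Q\leftarrow X_L$ and $\mathbf P^\vee\leftarrow L$, together with \cite[Lemma 3.10]{CC} (which identifies the non-$p^\circ C$-transversal locus of $p^\vee$ as $\mathbf P(\widetilde C)$), gives the conclusion directly. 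No separate treatment of $E_L$ is needed.

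Your approach is sound: the exceptional-divisor computation does go through once you note that at $(z,t)\in E_L$ the conormal of $X_L$ in $X\times L$ lies purely in the $T^*_zX$ factor (because $A_L$ transversal to $X$ forces each $H_t$ to be non-tangent to $X$ at $z$), so the images of $d\pi_L$ and $dp_L$ in $T^*_{(z,t)}X_L$ are complementary. This makes $p_L$ automatically $\pi_L^\circ C$-transversal on $E_L$, and the same non-tangency plus $C$-transversality of $Z$ gives $E_L\cap\mathbf P(\widetilde C)=\emptyset$. The paper's route is shorter and avoids the coordinate bookkeeping you anticipate, at the cost of invoking the black-box lemmas from \cite{CC}; yours is more self-contained and makes explicit why nothing bad happens along the exceptional divisor.
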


\begin{proof}
1.
We consider the commutative diagram
$$\begin{CD}
D\times_XT^*X@>>>
D\times_ZT^*Z\\
@VVV@VVV\\
D\times_{X_L}T^*{X_L}@>>>
T^*D
\end{CD}$$
of morphisms of vector bundles
on the exceptional divisor
$D=Z\times_XX_L$.
Since $D$ is smooth over $Z$,
the right vertical arrow
is injective.
Hence, the kernel of the
left vertical arrow
is a subset of the kernel of
the upper horizontal arrow.
Thus, if the immersion
$Z\to X$
is $C$-transversal
then
$\pi\colon X_L\to X$
is $C$-transversal.

2.
Since $p\colon X\times_{\mathbf P}Q\to X$
is smooth, the immersion
$X_L\to X\times_{\mathbf P}Q$
is $p^\circ C$-transversal
by \cite[Lemma 3.4.3]{CC}.
Hence, by \cite[Lemma 3.9.1]{CC}
applied to the cartesian diagram
$$\begin{CD}
X\times_{\mathbf P}Q@<<<X_L\\
@V{p^\vee}VV@VV{p_L}V\\
{\mathbf P}^\vee @<<<L
\end{CD}$$
and by \cite[Lemma 3.10]{CC},
the complement
$X_L\sm X_L\cap {\mathbf P}(\widetilde C)$
is the largest open subset
where $p_L\colon X_L\to L$
is $\pi_{L}^\circ C$-transversal.
\end{proof}

To show the existence of good pencil,
we consider the universal family of pencils.
Assume that $X$
is projective smooth
and let $E\subset \Gamma(X,{\cal L})$
be a subspace of finite dimension
defining a closed immersion 
$X\to {\mathbf P}={\mathbf P}(E^{\vee})$
as above.
We identify the Grassmannian variety
${\mathbf G}={\rm Gr}(2,E)$
parametrizing subspaces of dimension 2 of $E$
with the Grassmannian variety
${\mathbf G}={\rm Gr}(1,{\mathbf P}^\vee)$
parametrizing lines in ${\mathbf P}^\vee$.
The universal family 
${\mathbf A}\subset 
{\mathbf P}\times {\mathbf G}$
of linear subspace of codimension 2
of ${\mathbf P}={\mathbf P}(E^{\vee})$
consists of pairs $(x,L)$
of a point $x$ of the axis $A_L\subset {\mathbf P}$
of a line $L\subset {\mathbf P}^\vee$. 
Similarly as $Q=\{(x,H)\in {\mathbf P}\times
{\mathbf P}^\vee\mid x\in H\}$
is identified with ${\mathbf P}(
T^*{\mathbf P})$,
the scheme ${\mathbf A}$
is identified with
the Grassmannian bundle
${\rm Gr}(2,T^*{\mathbf P})$
parametrizing rank 2 subbundles
%{\color{red} 
of $T^*{\mathbf P}$ %}
by the injection
$T^*{\mathbf P}(1)\to E\times {\mathbf P}$.
The intersection 
$X\times_{\mathbf P}{\mathbf A}
=(X\times {\mathbf G})
\cap {\mathbf A}$
is canonically identified with
the bundle
${\rm Gr}(2,X\times_{\mathbf P}T^{*}{\mathbf P})$
of Grassmannian varieties.

%{\color{red} 
Let $\mathbf{D}=\{(H,L)\in \mathbf{P}^{\vee}\times \mathbf{G}\; |\; H\in L\}
\subset \mathbf{P}^{\vee}\times\mathbf{G}$
be the universal line
over ${\mathbf G}$. %}.
We canonically identify 
the fiber product
$X\times_{\mathbf P}{\mathbf A}
\times_{\mathbf G}{\mathbf D}$
with the flag bundle
${\rm Fl}(1,2,X\times_{\mathbf P}T^{*}{\mathbf P})$
parametrizing pairs
of sub line bundles and 
rank 2 subbundles of
$X\times_{\mathbf P}T^{*}{\mathbf P}$
with inclusions.
We consider the commutative diagram
\begin{equation}
\begin{CD}
X\times_{\mathbf P}Q
@<<<
X\times_{\mathbf P}{\mathbf A}
\times_{\mathbf G}{\mathbf D}
@>>>
X\times_{\mathbf P}{\mathbf A}\\
@VVV @VVV @VVV\\
{\mathbf P}^{\vee}
@<<<
{\mathbf D}
@>>>
{\mathbf G}
\end{CD}
\label{eqGDP}
\end{equation}
defined as
\begin{equation}
\begin{CD}
{\rm Gr}(1,X\times_{\mathbf P}T^*{\mathbf P})
@<<<
{\rm Fl}(1,2,X\times_{\mathbf P}T^*{\mathbf P})
@>>>
{\rm Gr}(2,X\times_{\mathbf P}T^*{\mathbf P})
\\
@VVV @VVV @VVV\\
{\rm Gr}(1,E)
@<<<
{\rm Fl}(1,2,E)
@>>>
{\rm Gr}(2,E).
\end{CD}
\label{eqGDPf}
\end{equation}
The horizontal arrows are
forgetful morphisms
and the vertical arrows
are induced by the canonical
injection
$\Omega^1_{\mathbf P}(1)
\to E\otimes {\mathcal O}_{\mathbf P}$.
The right square is cartesian.

Let $C\subset T^*X$
be a closed conical subset.
Define a closed subset
\begin{equation}
{\mathbf R}(\widetilde C)
\subset
X\times_{\mathbf P}{\mathbf A}
=
{\rm Gr}(2, X\times_{\mathbf P}T^*{\mathbf P})
\subset
X\times {\mathbf G}
\label{eqRC}
\end{equation}
to be the subset consisting
of $(x,V)$
such that the intersection
$V\cap (x\times_X\widetilde C)
\subset T^*\mathbf{P}$
is not a subset of $0$.
We also define a closed subset
\begin{equation}
{\mathbf Q}(\widetilde C)
\subset
X\times_{\mathbf P}{\mathbf A}
\times_{\mathbf G}{\mathbf D}
\label{eqQC}
\end{equation}
to be the inverse image of
${\mathbf P}(\widetilde C)
\subset
X\times_{\mathbf P}Q$
by the upper left horizontal arrow
in (\ref{eqGDP}).
The subset
${\mathbf R}(\widetilde C)
\subset
X\times_{\mathbf P}{\mathbf A}$
is the image 
of ${\mathbf Q}(\widetilde C)$
by the upper right arrow 
$X\times_{\mathbf P}{\mathbf A}
\times_{\mathbf G}{\mathbf D}\to
X\times_{\mathbf P}{\mathbf A}$
of (\ref{eqGDP}).

\begin{lem}\label{lmRC}
Let $C\subset T^*X$ be a conical
closed subset.

{\rm 1.}
The complement
$X\times_{\mathbf P}{\mathbf A}
\sm {\mathbf R}(\widetilde C)$
is the largest open subset where
the pair $(p,p')$ of
$p\colon X\times_{\mathbf P}{\mathbf A}
\to X$ and
$p'\colon X\times_{\mathbf P}{\mathbf A}
\to {\mathbf G}$
is $C$-transversal.

{\rm 2.}
For a line $L\subset {\mathbf P}^\vee$
such that $A_{L}$ meets $X$ transversally,
the following conditions
are equivalent:

{\rm (1)}
The immersion
$Z=X\cap A_L\to X$
is $C$-transversal.

{\rm (2)}
The point of
${\mathbf G}$ corresponding to $L$
is not contained in 
the image of 
${\mathbf R}(\widetilde C)\subset
X\times_{\mathbf P}{\mathbf A}$
by 
$X\times_{\mathbf P}{\mathbf A}
\to {\mathbf G}$.

{\rm (3)}
%Further, if these equivalent
%conditions are satisfied,
The pair $X\gets X_L\to L$
is $C$-transversal
on a neighborhood of $\pi_L^{-1}(Z)=
Z\times L\subset X_L$.
\end{lem}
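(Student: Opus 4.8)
The plan is to prove the first assertion by computing the differential of $p'$ directly, and to deduce the three equivalences of the second assertion from Lemma \ref{lem}, the geometry of the blow-up $\pi_L\colon X_L\to X$, and elementary bookkeeping with conormal spaces.

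For the first assertion, $p\colon X\times_{\mathbf P}{\mathbf A}\to X$ is smooth, hence $C$-transversal, the closed conical subset $p^{\circ}C\subset T^{\ast}(X\times_{\mathbf P}{\mathbf A})$ is defined and contained in the subbundle $p^{\ast}\Omega^1_X$, and the pair $(p,p')$ is $C$-transversal at a point $m=(x,V)$ if and only if $p'$ is $p^{\circ}C$-transversal there; since ${\mathbf R}(\widetilde C)$ is closed, it is enough to identify, point by point, the $C$-transversality locus of $(p,p')$ with its complement. Write $y=i(x)$ and ${\mathcal L}={\mathcal O}_{\mathbf P}(1)$; the point $V$ is a $2$-plane in $T^{\ast}_y{\mathbf P}$, whose twist by ${\mathcal L}_y$ lies in the hyperplane $T^{\ast}_y{\mathbf P}(1)=\ker(E\to{\mathcal L}_y)$ of $E$, and this $2$-plane is the point $p'(m)\in{\mathbf G}={\rm Gr}(2,E)$. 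I would compute the tangent map $T_m(X\times_{\mathbf P}{\mathbf A})\to T_{p'(m)}{\mathbf G}={\rm Hom}(V,E/V)$: on the relative tangent space ${\rm Hom}(V,T^{\ast}_y{\mathbf P}(1)/V)$ of $X\times_{\mathbf P}{\mathbf A}$ over $X$ it is the inclusion induced by $T^{\ast}_y{\mathbf P}(1)\subset E$, and it sends a lift of $v\in T_xX$ to the class of the composite $V\to{\mathcal L}_y=E/T^{\ast}_y{\mathbf P}(1)\hookrightarrow E/V$, where the first arrow is contraction with $v$ (viewed in $T_y{\mathbf P}$ through $i$), taken modulo the relative part. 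Dualizing, a covector $\zeta\in T^{\ast}_{p'(m)}{\mathbf G}={\rm Hom}(E/V,V)$ has $dp'(\zeta)$ inside $p^{\ast}\Omega^1_X$ — hence a candidate to lie in $p^{\circ}C$ — if and only if $\zeta$ annihilates $T^{\ast}_y{\mathbf P}(1)/V$, i.e.\ $\zeta\in{\rm Hom}({\mathcal L}_y,V)$; and for such $\zeta$ one finds $dp'(\zeta)=di(\omega)\in T^{\ast}_xX$, where, via a canonical isomorphism, $\omega$ runs over the $2$-plane $V\subset T^{\ast}_y{\mathbf P}$ as $\zeta$ runs over ${\rm Hom}({\mathcal L}_y,V)$. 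Therefore $(p,p')$ fails to be $C$-transversal at $m$ precisely when some nonzero $\omega\in V$ satisfies $di(\omega)\in C_x$, that is, when $V\cap\widetilde C_x\neq\{0\}$, that is, when $m\in{\mathbf R}(\widetilde C)$; since the $C$-transversality locus is then open, it is the largest open subset on which $(p,p')$ is $C$-transversal. (Carrying the twist by ${\mathcal L}$ correctly through the last identification is the one delicate point; it is harmless because ${\rm Gr}(2,T^{\ast}{\mathbf P})={\rm Gr}(2,T^{\ast}{\mathbf P}\otimes{\mathcal L})$ canonically.)

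For the second assertion, the standing transversality of $A_L$ and $X$ makes $di$ restrict, for each $x\in Z=X\cap A_L$, to an isomorphism from the conormal space of $A_L$ in ${\mathbf P}$ at $i(x)$ onto the conormal space of $Z$ in $X$ at $x$; moreover, for a hyperplane $H_t\in L$ (so that $A_L\subset H_t$), $di$ carries the conormal line of $H_t$ at $i(x)$ to the conormal line of $X\cap H_t$ at $x$, and these lines exhaust all lines in the conormal space of $Z$ in $X$ at $x$ as $H_t$ ranges over the pencil $L$. Granting this: $(1)\Leftrightarrow(2)$, because the point of ${\mathbf G}$ attached to $L$ lies in $p'({\mathbf R}(\widetilde C))$ if and only if, for some $x\in Z$, the conormal space of $A_L$ at $i(x)$ meets $\widetilde C_x$ outside $0$, which — transported along the isomorphism above, using that $C_x$ is conical — says that the conormal space of $Z$ in $X$ at $x$ meets $C_x$ outside $0$ for some $x\in Z$, i.e.\ the failure of $(1)$ by Definition \ref{df1.1}. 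The same bookkeeping shows that under $(1)$ the exceptional divisor $D=\pi_L^{-1}(Z)$ is disjoint from ${\mathbf P}(\widetilde C)$, because $(x,H_t)\in D\cap{\mathbf P}(\widetilde C)$ means the conormal line of $X\cap H_t$ at $x$ meets $C_x$. Hence $(1)\Rightarrow(3)$: under $(1)$, Lemma \ref{lem}.1 gives that $\pi_L$ is $C$-transversal, $U:=X_L\setminus(X_L\cap{\mathbf P}(\widetilde C))$ is an open neighbourhood of $D$, and by Lemma \ref{lem}.2 the projection $p_L\colon X_L\to L$ is $\pi_L^{\circ}C$-transversal on $U$, so $(\pi_L|_U,p_L|_U)$ is $C$-transversal, which is $(3)$. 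And $(3)\Rightarrow(1)$: it suffices to use that $\pi_L$ is $C$-transversal at each point of $D$; near $D$, $\pi_L$ is the blow-up of $X$ along the smooth codimension-$2$ subvariety $Z$, and a local computation identifies $\ker(d\pi_L)$ at $(x,H_t)\in D$ with the conormal line of $X\cap H_t$ at $x$ inside $T^{\ast}_xX$, so $C$-transversality of $\pi_L$ at $(x,H_t)$ forces that line to meet $C_x$ only in $0$; letting $H_t$ run over $L$ gives that the conormal space of $Z$ in $X$ at $x$ meets $C_x$ only in $0$ for every $x\in Z$, which is $(1)$.

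The main obstacle is the differential computation in the first assertion: one must pin down exactly which covectors on ${\mathbf G}$ map into the subbundle supporting $p^{\circ}C$, then compute their images in $T^{\ast}X$, keeping track of the line-bundle twist. Once that is in hand, the second assertion is essentially formal, the points needing care being the identification of $\ker(d\pi_L)$ along $D$ with the conormal direction of the hyperplane and the conormal bookkeeping behind $(1)\Leftrightarrow(2)$. One could instead try to deduce the first assertion from the analogous statement for $X\times_{\mathbf P}Q$ and ${\mathbf P}(\widetilde C)$ underlying the proof of Lemma \ref{lem}.2, via the diagram (\ref{eqGDP}) and the relation that ${\mathbf R}(\widetilde C)$ is the image of ${\mathbf Q}(\widetilde C)$, pulling transversality back along the smooth morphisms in the upper row of (\ref{eqGDP}); but the left-hand square of (\ref{eqGDP}) is not cartesian, so the direct computation above seems cleaner.
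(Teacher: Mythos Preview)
Your argument is correct, but it proceeds quite differently from the paper's.

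For part 1, the paper avoids computing $dp'$ explicitly: it invokes \cite[Lemma 3.6.9]{CC} to recast $C$-transversality of the pair $(p,p')$ as $C\times T^*{\mathbf G}$-transversality of the single immersion $(p,p')\colon X\times_{\mathbf P}{\mathbf A}\to X\times{\mathbf G}$, and then identifies the relevant kernel with the conormal bundle $T^*_{X\times_{\mathbf P}{\mathbf A}}(X\times{\mathbf G})$, which is nothing but the restriction of the universal rank-$2$ subbundle on ${\mathbf A}={\rm Gr}(2,T^*{\mathbf P})$. This yields the characterization by ${\mathbf R}(\widetilde C)$ in one stroke, without tracking the line-bundle twist you worry about. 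Your direct computation of the tangent map of $p'$ and its dual reaches the same conclusion by hand; it is longer but self-contained and does not rely on the transversality calculus of \cite{CC}.

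For part 2, the paper again leans on \cite{CC}: it deduces (1)$\Leftrightarrow$(2) from part 1 together with \cite[Lemmas 3.4.1, 3.4.3, 3.6.1, 3.9.1]{CC} applied to the cartesian square with $Z\to\{L\}$ smooth, and then reads off (2)$\Leftrightarrow$(3) from Lemma~\ref{lem}.2 by observing that (2) says $Z\times L$ misses $X_L\cap{\mathbf P}(\widetilde C)$. Your route is more geometric: you prove (1)$\Leftrightarrow$(2) by the explicit identification of the conormal space of $A_L$ with that of $Z$ via $di$, and you close the circle with (3)$\Rightarrow$(1) by computing $\ker(d\pi_L)$ along the exceptional divisor. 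This last step is in fact a converse of Lemma~\ref{lem}.1, and it makes the implication (3)$\Rightarrow$(1) fully explicit, whereas in the paper's argument that direction is implicit (Lemma~\ref{lem}.2 is stated under the hypothesis (1), so strictly speaking one needs (3)$\Rightarrow$(1) before invoking it). What the paper's approach buys is brevity and a uniform use of the transversality lemmas from \cite{CC}; what yours buys is independence from that machinery and a transparent local picture of the blow-up.
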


\begin{proof}
1.
By \cite[Lemma 3.6.9]{CC},
the largest open subset
$U\subset X\times _{\mathbf P}{\mathbf A}$
where the pair
$(p,p')$
is $C$-transversal equals
the largest open subset
where $(p,p')\colon 
X\times_{\mathbf P}{\mathbf A}\to X\times
{\mathbf G}$
is $C\times T^*{\mathbf G}$-transversal.
The kernel 
${\rm Ker}\bigl(
(X\times_{\mathbf P}{\mathbf A})\times_XT^*X
\oplus
(X\times_{\mathbf P}{\mathbf A})\times_
{\mathbf G}T^*{\mathbf G}
\to 
T^*(X\times_{\mathbf P}{\mathbf A})\bigr)$
is canonically identified with
the conormal bundle
$T^*_{X\times_{\mathbf P}{\mathbf A}}
(X\times {\mathbf G})$
and further with
the restriction %{\color{red}$\{(y,V)\; |\; y\in V\}\subset T^{\ast}\mathbf{P}\times_{\mathbf{P}}\mathbf{A}$}
of
the universal sub vector bundle 
of rank 2 of
$T^*{\mathbf P}$ on
${\mathbf A}={\rm Gr}(2,T^*{\mathbf P})$.
Hence,  $U$ is the complement
of ${\mathbf R}(\widetilde C)$.

2.
(1)$\Leftrightarrow$(2):
Since $p\colon
X\times_{\mathbf P}{\mathbf A}
\to X$ is smooth,
the condition (1) is equivalent to that
the immersion
$Z\to 
X\times_{\mathbf P}{\mathbf A}$
is $p^\circ C$-transversal by
\cite[Lemma 3.4.1]{CC} and
\cite[Lemma 3.4.3]{CC}.
We consider the cartesian diagram
\begin{equation}
\begin{CD}
X\times_{\mathbf P}{\mathbf A}
@<<< Z\\
@VVV@VVV\\
{\mathbf G}@<<< \{L\}.
\end{CD}
\label{eqprB}
\end{equation}
Since the right vertical arrow
$Z\to \{L\}$ in (\ref{eqprB})
is smooth
%{\color{red}
by the assumption that %} 
the axis $A_L$ meets $X$
transversely,
it is further equivalent to
that the left vertical arrow
$X\times_{\mathbf P}{\mathbf A}
\to {\mathbf G}$
is $p^\circ C$-transversal on 
a neighborhood of $Z$ by
\cite[Lemma 3.6.1]{CC} and
\cite[Lemma 3.9.1]{CC}.
Thus the assertion follows from 1.

(2)$\Leftrightarrow$(3):
The condition (2) means that
$Z\times L\subset X_L$
does not meet $X_L\cap {\mathbf P}
(\widetilde C)$.
This is equivalent to the condition
(3) by Lemma \ref{lem}.2.
\end{proof}
%\medskip

\begin{lem}%[{\rm cf. \cite[Corollary 4.24.2]{CC}}]
\label{goodpencil}
Let $X$ be a smooth projective scheme 
equidimensional
of dimension $n$
over an algebraically closed field $k$.
Let $C\subset T^{\ast}X$ be a closed conical subset equidimensional
of dimension $n$.
Let $\mathcal{L}$ be a very ample invertible $\dvr_{X}$-module and $E\subset \Gamma(X,\mathcal{L})$ a sub $k$-vector space 
satisfying the condition {\rm (E)} and {\rm (C)}. 
Let $\mathbf{G}=\mathrm{Gr}(1,\mathbf{P}^{\vee})$ be the Grassmannian variety
parameterizing lines in $\mathbf{P}^{\vee}$.
%Let $C_{b}$ be an irreducible component of $C$.
Then, there exists 
a dense open subset $U\subset \mathbf{G}$
consisting of
lines $L\subset \mathbf{P}^{\vee}$
satisfying the following conditions 
{\rm (1)}--{\rm (7):}

{\rm (1)}
The axis $A_{L}$ meets $X$ transversally
and the immersion
$X\cap A_{L}\rightarrow X$ 
is $C$-transversal.

{\rm (2)}
The blow-up
$\pi_L\colon X_L\to X$
is $C$-transversal.

{\rm (3)}
The morphism $p_{L}\colon X_{L}\rightarrow L$ has at most isolated characteristic points with respect to $\pi_L^{\circ}C$.

{\rm (4)}
For every closed point $y$ of $L$,
there exists at most one point $x$ on the fiber $X_{L,y}$ at $y$ that is
an isolated characteristic point of
$p_{L}\colon X_{L}\rightarrow L$.

{\rm (5)}
No isolated characteristic point of $p_{L}$
is contained in 
the inverse image by $\pi_L\colon X_{L}\rightarrow X$ 
of the intersection $X\cap A_{L}$.

{\rm (6)}
For every irreducible component
$C_a$ of $C$,
the intersection
$X_L\cap {\mathbf P}(\widetilde C_a)$
is non-empty.

{\rm (7)}
For every pair of irreducible components
$C_a\neq C_b$ of $C$,
the intersection
$X_L\cap {\mathbf P}(\widetilde C_a)
\cap {\mathbf P}(\widetilde C_b)$
is empty.
%meet $X_{L}\cap \bigcup_{a}\mathbf{P}(\widetilde C_{a})$.
\end{lem}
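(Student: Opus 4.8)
The plan is to produce the open set $U$ as a finite intersection of dense open subsets of $\mathbf{G}=\mathrm{Gr}(1,\mathbf{P}^\vee)$, one for each of the conditions (1)--(7), and the work reduces to checking that each of the seven loci is open and dense. The starting point is the universal family of pencils encoded in diagram (\ref{eqGDP}) and its flag-bundle description (\ref{eqGDPf}), together with the closed subsets $\mathbf{R}(\widetilde C)$ and $\mathbf{Q}(\widetilde C)$ and Lemmas \ref{lem}, \ref{lmRC}. First I would dispose of the transversality of $A_L$ with $X$ and the immersion $X\cap A_L\to X$: the first is a standard Bertini-type genericity statement for axes of pencils under hypothesis (E), and then by Lemma \ref{lmRC}.2 the $C$-transversality of $Z=X\cap A_L\to X$ holds exactly for $L$ outside the image of the closed subset $\mathbf{R}(\widetilde C)\subset X\times_{\mathbf P}\mathbf{A}$ under $X\times_{\mathbf P}\mathbf{A}\to\mathbf{G}$; since $\dim\mathbf{R}(\widetilde C)$ is controlled by $\dim C=n$ and a fiber-dimension count (the fiber of $X\times_{\mathbf{P}}\mathbf{A}\to X$ over a point meeting $\widetilde C$ in a nonzero vector is a proper subvariety of the Grassmannian fiber), its image in $\mathbf{G}$ is a proper closed subset, so (1) is dense open. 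Condition (2) is then automatic on that same open set by Lemma \ref{lem}.1.

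For (3)--(5) I would work on the universal total space $\mathbf{X}=X\times_{\mathbf P}\mathbf{A}\times_{\mathbf G}\mathbf{D}$, which over the open locus of (1) is smooth with the projection to $\mathbf{D}$ realizing the family $p_L\colon X_L\to L$. By Lemma \ref{lem}.2 the characteristic locus in each fiber is $X_L\cap\mathbf{P}(\widetilde C)$, and globally it is $\mathbf{Q}(\widetilde C)\subset\mathbf{X}$; the point is that $\mathbf{Q}(\widetilde C)$ maps to $\mathbf{G}$ with fibers of bounded dimension, so a dimension count in the spirit of \cite[Section 3]{CC} shows that for $L$ in a dense open set the intersection with $X_L$ is finite (giving isolated characteristic points, condition (3)), that no two such points lie over the same $y\in L$ (condition (4), obtained by intersecting with a suitable incidence locus in $\mathbf{D}\times_{\mathbf{G}}\mathbf{D}$ and checking that its image in $\mathbf{G}$ is a proper closed subset), and that none lies over $X\cap A_L$ (condition (5), obtained by intersecting $\mathbf{Q}(\widetilde C)$ with the preimage of $Z\times L\subset X_L$, whose image in $\mathbf{G}$ has dimension $<\dim\mathbf{G}$ since $Z$ has codimension $2$ in $X$). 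Each of these is a Chevalley-type argument: one exhibits a constructible subset of $\mathbf{G}$, bounds the dimension of the relevant subscheme of $\mathbf{X}$ by the dimension of $\mathbf{G}$ minus one, and concludes that its closure is a proper closed subset.

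Conditions (6) and (7) are handled symmetrically using $\mathbf{R}(\widetilde C_a)$ for each irreducible component. For (6), $X_L\cap\mathbf{P}(\widetilde C_a)$ being nonempty is equivalent, via the image description following (\ref{eqQC}), to $L$ lying in a certain subset of $\mathbf{G}$ whose complement we must show is not dense; here one uses condition (C), which guarantees that $\widetilde C_a$ is not contained in the zero section so that the morphism $\mathbf{Q}(\widetilde C_a)\to\mathbf{G}$ is dominant, hence its image contains a dense open set — equivalently, $\mathbf{P}(\widetilde C_a)$ has dimension $n-1$ and a generic line $L$ meets it. For (7), $X_L\cap\mathbf{P}(\widetilde C_a)\cap\mathbf{P}(\widetilde C_b)$ for $a\neq b$: the locus $\mathbf{P}(\widetilde C_a)\cap\mathbf{P}(\widetilde C_b)$ has dimension at most $n-1$ but its image under the pencil structure has smaller dimension, so the set of $L$ meeting it is a proper closed subset of $\mathbf{G}$; intersecting over the finitely many pairs $(a,b)$ keeps density. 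The main obstacle I expect is the careful bookkeeping of dimensions in (3)--(5): one must verify that the relevant subschemes of the universal space $\mathbf{X}$ are cut out by the expected number of conditions so that their images in $\mathbf{G}$ genuinely drop dimension — in particular showing that the characteristic locus $\mathbf{Q}(\widetilde C)$ is equidimensional of the expected dimension and that the incidence conditions for (4) and (5) are transverse enough — and this is where one leans most heavily on the Grassmannian-bundle identifications in (\ref{eqGDPf}) and the lemmas of \cite[Section 3]{CC}.
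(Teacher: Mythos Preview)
Your scaffolding matches the paper's: exhibit a dense open subset of $\mathbf{G}$ for each condition and intersect. Conditions (1), (2), and (5) go through essentially as you describe, via Lemmas~\ref{lem} and~\ref{lmRC}, and naive dimension counts do suffice for (3) and (7). The genuine gap is at (4) and (6): both require a structural fact about the projection $\mathbf{P}(\widetilde C)\to\mathbf{P}^\vee$ that dimension counting alone cannot supply, and the paper draws it from \cite[Corollary~3.21]{CC}. For (4), your incidence argument would need the off-diagonal of $\mathbf{P}(\widetilde C)\times_{\mathbf{P}^\vee}\mathbf{P}(\widetilde C)$ to project to a codimension-$\geq 2$ subset of $\mathbf{P}^\vee$. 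But $\mathbf{P}(\widetilde C)$ has dimension $\dim\mathbf{P}^\vee-1$ and maps generically finitely onto its image $\Delta$; if the generic degree were $\geq 2$, the off-diagonal would dominate the divisor $\Delta$ and every line would meet it, so (4) would fail for generic $L$. What is needed is that $\mathbf{P}(\widetilde C)\to\Delta$ is finite \emph{radicial} outside a closed subset $\Delta'\subset\mathbf{P}^\vee$ of codimension $\geq 2$; this is exactly \cite[Corollary~3.21]{CC}, and it uses condition (E) in an essential way. The paper routes (3), (4), (6), (7) all through this single citation: once a generic $L$ avoids $\Delta'$, the map $X_L\cap\mathbf{P}(\widetilde C)\to L$ is automatically injective, each $\Delta_a$ is a divisor so $L$ meets it, and $\Delta_a\cap\Delta_b\subset\Delta'$.

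For (6) specifically, condition (C) only ensures that $\mathbf{P}(\widetilde C_a)$ is nonempty of dimension $\dim\mathbf{P}^\vee-1$; it does not by itself force the image $\Delta_a\subset\mathbf{P}^\vee$ to be a divisor rather than of codimension $\geq 2$, and in the latter case a generic line would miss $\Delta_a$ entirely. That $\Delta_a$ is a divisor is again part of \cite[Corollary~3.21]{CC}. As a side remark, your ``universal total space'' $X\times_{\mathbf P}\mathbf{A}\times_{\mathbf G}\mathbf{D}$ parametrizes only the exceptional locus $Z\times L\subset X_L$; the universal $X_L$ is $(X\times_{\mathbf P}Q)\times_{\mathbf{P}^\vee}\mathbf{D}$.
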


\begin{proof}
By Bertini,
the lines $L\subset {\mathbf P}^\vee$
such that the axis
$A_L\subset {\mathbf P}$
intersects $X$ transversely form
a dense open subset $U_0\subset
{\mathbf G}$.
We show
that the image of ${\mathbf R}(\widetilde C)$
by 
$p'\colon X\times_{\mathbf P}{\mathbf A}
\to {\mathbf G}$
is not dense.
Since ${\mathbf P}(\widetilde C)
\subset X\times_{\mathbf P}Q$
is of codimension $n$,
its inverse image
${\mathbf Q}(\widetilde C)
\subset X\times_{\mathbf P}Q
\times_{\mathbf G}{\mathbf D}$
is also of codimension $n$.
Since ${\mathbf D}$ is a ${\mathbf P}^1$-bundle over ${\mathbf G}$
the subset ${\mathbf R}(\widetilde C)
\subset X\times_{\mathbf P}{\mathbf A}$
is of codimension $\geqq n-1$.
Since $\dim X\times_{\mathbf P}{\mathbf A}
=\dim {\mathbf G}+n-2$,
the image of ${\mathbf R}(\widetilde C)$
by 
$X\times_{\mathbf P}{\mathbf A}
\to {\mathbf G}$
is not dense, as claimed.

Let $L$ be a line corresponding to
a point of $U_1=U_0\sm
(U_0\cap p'({\mathbf R}(\widetilde C)))
\subset {\mathbf G}$.
Then, the condition (1) is satisfied
by Lemma \ref{lmRC}.2 (2)$\Rightarrow$(1).
Hence the condition (2) is satisfied
by Lemma \ref{lem}.1.
%Since $p\colon X\times_{\mathbf P}Q\to X$
%is smooth, the immersion
%$X_L\to X\times_{\mathbf P}Q$
%is $p^\circ C$-transversal.
%Hence, by \cite[Lemma 2.9.1]{CC}
%applied to the cartesian diagram
%$$\begin{CD}
%X\times_{\mathbf P}Q@<<<X_L\\
%@V{p^\vee}VV@VV{p_L}V\\
%{\mathbf P}^\vee @<<<L
%\end{CD}$$
%and by \cite[Lemma 2.10]{CC},
%the complement
%$X_L\sm X_L\cap {\mathbf P}(\widetilde C)$
%is the largest open subset
%where $p_L\colon X_L\to L$
%is $\pi^\circ C$-transversal.
By Lemma \ref{lem}.2, the condition (3)
is satisfied if and only if the intersection
$X_L\cap {\mathbf P}(\widetilde C)$
is finite.
Further the condition (4)
is satisfied if and only if the restiction
$X_L\cap {\mathbf P}(\widetilde C)
\to L$ of $p_L$ is an injection.

Let $\Delta=\bigcup_a\Delta_a$ denote the image of $\mathbf{P}(\widetilde C)=\bigcup_{a}\mathbf{P}(\widetilde C_{a})$ by the projection $X\times_{\mathbf{P}}Q\rightarrow \mathbf{P}^{\vee}$.
By \cite[Corollary 3.21]{CC}, there exists a closed subset $\Delta^{\prime}\subset \Delta$
such that $\Delta'\subset {\mathbf P}^\vee$
is of codimension $\geqq 2$
and that
$\mathbf{P}(\widetilde C)\rightarrow \Delta$ is finite radicial outside of $\Delta^{\prime}$.
Further,
the image $\Delta_a\subset {\mathbf P}^\vee$ of
$\mathbf{P}(\widetilde C_{a})$
is a divisor for each irreducible component
$C_a$ of $C$
and
%{\color{red}we may assume} 
$\Delta_a\cap \Delta_b
\subset \Delta'$
for every pair of irreducible components
$C_a\neq C_b$ of $C$.
Hence, the lines 
$L\subset {\mathbf P}^\vee$
satisfying the conditions (3) and
(4) form a dense open subset 
$U_2\subset U_0\subset {\mathbf G}$.

We set $U=U_1\cap U_2$.
Let $L\subset \mathbf{P}^{\vee}$ 
be a line in $U$.
As we have seen,
the line $L$ satisfies
the conditions (1)--(4) above.
The line $L$ also
satisfies the condition (5)
by Lemma \ref{lmRC}.2
(2)$\Rightarrow$(3).
For each irreducible component
$C_a$ of $C$, since
the image $\Delta_a
\subset {\mathbf P}^\vee$ is a divisor,
the intersection 
$\Delta_a\cap L$ is non-empty.
Since $\Delta_a\cap L$ is
the image of 
$X_L\cap {\mathbf P}(\widetilde C_a)$,
the condition (6) is satisfied.
%{\color{red} %For every pair of irreducible components
%$C_a\neq C_b$ of $C$,
%the intersection $\Delta_a
%\cap \Delta_b$ is
%a subset of 
Since $\Delta'$ %and 
does not meet $L$
by the construction of $U$,
%Hence
the condition (7) is satisfied.%}
\end{proof}

\section{Euler-Poincar\'e characteristics
determine the characteristic cycle}

We give a sufficient condition
for constructible complexes
to have the same characteristic cycles.
This will be used in the proof of
Theorem \ref{thml}
in the next section.

\begin{defn}\label{dfEP}
Let $X$ be a scheme
of finite type over a field $k$
and let $\bar k$ be an algebraic
closure of $k$.
Let $\Lambda$ and $\Lambda'$
be finite fields of characteristics
different from that of $k$ and
let ${\cal F}$ and ${\cal F}'$
be constructible complexes
of $\Lambda$-modules and 
of $\Lambda'$-modules respectively.
We say that
${\cal F}$ and ${\cal F}'$
have {\em universally the same
Euler-Poincar\'e characteristics}
if for every
separated scheme $Z$ of
finite type over $k$
and for every morphism
$g\colon Z\to X$ over $k$,
we have
$\chi_c(Z_{\bar k},g^*{\cal F})
=\chi_c(Z_{\bar k},g^*{\cal F}')$.
\end{defn}

If $k$ is of characteristic $0$,
the condition is equivalent to
that $\dim {\cal F}_{\bar x}
=\dim {\cal F}'_{\bar x}$ 
for every geometric point 
$\bar x$ of $X$.
\if{
If $k$ is of characteristic $>0$,
Hiroki Kato announces(?) 
to have a proof of that
the condition is equivalent to
this together with the equality
of the Artin conductors
$a_zg^*{\cal F}
=a_zg^*{\cal F}'$
for every morphism
$g\colon Z\to X$
from a smooth curve $Z$
and for every closed point $z$
of the smooth compactification
$\bar Z$ of $Z$.}\fi

\begin{lem}\label{lmEP}
Let $X$ be a scheme
of finite type over a field $k$.
Let $\Lambda$ and $\Lambda'$
be finite fields of characteristics
different from that of $k$ and
let ${\cal F}$ and ${\cal F}'$
be constructible complexes
of $\Lambda$-modules and 
of $\Lambda'$-modules respectively
with universally the same
Euler-Poincar\'e characteristics.

{\rm 1.}
Let $h\colon W\to X$
be a morphism of schemes
of finite type over $k$.
Then $h^*{\cal F}$ and $h^*{\cal F}'$
have universally the same
Euler-Poincar\'e characteristics.

{\rm 2.}
Let $f\colon X\to Y$
be a separated morphism of schemes
of finite type over $k$.
Then,
$Rf_!{\cal F}$ and $Rf_!{\cal F}'$
have universally the same
Euler-Poincar\'e characteristics.
\end{lem}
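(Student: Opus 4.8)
The plan is to deduce both statements directly from Definition \ref{dfEP}, using the compatibility of compactly supported cohomology with pull-back and with $Rf_!$ together with base change for $Rf_!$.

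For part 1, I would argue as follows. Let $Z$ be a separated scheme of finite type over $k$ and let $g\colon Z\to W$ be a morphism over $k$. Composing with $h$ gives a morphism $h\circ g\colon Z\to X$, and $g^*h^*{\cal F}=(h\circ g)^*{\cal F}$, and similarly for ${\cal F}'$. Applying the hypothesis to $h\circ g$ then yields
$$\chi_c(Z_{\bar k},g^*h^*{\cal F})=\chi_c(Z_{\bar k},(h\circ g)^*{\cal F})=\chi_c(Z_{\bar k},(h\circ g)^*{\cal F}')=\chi_c(Z_{\bar k},g^*h^*{\cal F}'),$$
which is the claim. No difficulty arises here.

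For part 2, I would first note that $Rf_!{\cal F}$ and $Rf_!{\cal F}'$ are again constructible, by the finiteness theorem for $Rf_!$ along the morphism $f$ of finite type. Given a separated scheme $Z$ of finite type over $k$ and a morphism $g\colon Z\to Y$ over $k$, form the cartesian square
$$\begin{CD}
Z\times_YX @>{g'}>> X\\
@V{f'}VV @VV{f}V\\
Z @>{g}>> Y,
\end{CD}$$
and observe that $Z\times_YX$ is again separated of finite type over $k$, since $Z$ is and $f$ is separated of finite type. Base change for $Rf_!$ gives $g^*Rf_!{\cal F}\cong Rf'_!g'^*{\cal F}$, and, writing $a\colon Z\to\Spec k$ and using the transitivity $Ra_!\circ Rf'_!=R(a\circ f')_!$, one obtains
$$R\Gamma_c(Z_{\bar k},g^*Rf_!{\cal F})\cong R\Gamma_c((Z\times_YX)_{\bar k},g'^*{\cal F}),$$
hence $\chi_c(Z_{\bar k},g^*Rf_!{\cal F})=\chi_c((Z\times_YX)_{\bar k},g'^*{\cal F})$; the same computation applies to ${\cal F}'$. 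Since ${\cal F}$ and ${\cal F}'$ have universally the same Euler-Poincar\'e characteristics and $g'\colon Z\times_YX\to X$ is a morphism from a separated scheme of finite type over $k$, the two right-hand sides agree, and therefore $\chi_c(Z_{\bar k},g^*Rf_!{\cal F})=\chi_c(Z_{\bar k},g^*Rf_!{\cal F}')$.

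The argument is essentially formal, so there is no serious obstacle; the only points needing attention are that $Z\times_YX$ must remain separated of finite type over $k$ so as to be an admissible test scheme in Definition \ref{dfEP} — which is precisely where the separatedness of $f$ enters — and the combination of base change for $Rf_!$ with the transitivity of proper push-forward, which together identify the compactly supported cohomology of $Z$ with coefficients in $Rf'_!g'^*{\cal F}$ with that of the total space $Z\times_YX$ with coefficients in $g'^*{\cal F}$.
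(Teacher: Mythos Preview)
Your proof is correct and follows essentially the same approach as the paper's: part 1 by composing $g$ with $h$, and part 2 by proper base change for $Rf_!$ combined with transitivity, reducing to the Euler--Poincar\'e characteristic on the fiber product. Your write-up is simply more explicit (noting constructibility of $Rf_!{\cal F}$ and checking that $Z\times_YX$ is separated of finite type), but the argument is the same.
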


\proof{
We may assume $k$ is algebraically closed.
Let $Z$ be a separated scheme
of finite type over $k$.

1.
Let $g\colon Z\to W$ be a morphism
over $k$.
Then, we have
$\chi_c(Z,g^*h^*{\cal F})
=
\chi_c(Z,g^*h^*{\cal F}')$ and
the assertion follows.

2.
Let $g\colon Z\to Y$ be a morphism
over $k$
and let $g'\colon Z'\to X$
be the base change.
Since the proper base change theorem
implies the equalities
$\chi_c(Z,g^*Rf_!{\cal F})
=
\chi_c(Z',g^{\prime *}{\cal F})
=
\chi_c(Z',g^{\prime *}{\cal F}')
=
\chi_c(Z,g^*Rf_!{\cal F}')$,
the assertion follows.
\qed}
\medskip

We recall the definition
of the relative Grothendieck group
$K_0({\cal V}_X)$
\cite[2]{Lo}.
Let ${\cal V}_X$
denote the category
consisting of morphisms
$g\colon Z\to X$ of schemes
over $k$ where $Z$
is a separated scheme of finite
type over $k$.
The Grothendieck group
$K_0({\cal V}_X)$
is the quotient of free abelian
group generated by 
isomorphisms classes
$[g\colon Z\to X]$
of objects of ${\cal V}_X$,
divided by the relations
$[g\colon Z\to X]-
[g_V\colon V\to X]
=
[g_W\colon W\to X]$
for closed subschemes
$V\subset Z$
and the complement $W=Z\sm V$
where $g_V$ and $g_W$
denote the restrictions of $g$.

Let $\widetilde F(X)
={\rm Hom}(K_0({\cal V}_X),{\mathbf Z})$
denote the dual abelian group.
For a morphism $h\colon W\to X$
of schemes of finite type over $k$,
the functor $h_*\colon {\cal V}_W
\to {\cal V}_X$
sending $[g\colon Z\to W]$
to $[h\circ g\colon Z\to X]$
induces a morphism
$h_*\colon K_0({\cal V}_W)
\to K_0({\cal V}_X)$
and its dual $h^*\colon \widetilde F(X)
\to \widetilde F(W)$.
For a separated morphism $f\colon X\to Y$
of schemes of finite type over $k$,
the functor $f^*\colon {\cal V}_Y
\to {\cal V}_X$
sending $[g\colon Z\to Y]$
to $[g'\colon Z\times_YX\to X]$
induces a morphism
$f^*\colon K_0({\cal V}_Y)
\to K_0({\cal V}_X)$
and its dual $f_!\colon \widetilde F(X)
\to \widetilde F(Y)$.

Let $K(X,\Lambda)$ 
denote the Grothendieck group of constructible 
sheaves of $\Lambda$-modules on $X$.
Then, the pairing
$K(X,\Lambda)
\times K_0({\cal V}_X)\to {\mathbf Z}$
defined by $({\cal F},[g\colon Z\to X])
\mapsto \chi_c(Z_{\bar k},g^*{\cal F})$
induces a morphism
\begin{equation}
K(X,\Lambda)
\to
\widetilde F(X).
\label{eqKF}
\end{equation}
The proof of
Lemma \ref{lmEP}.1 shows that,
for a morphism $h\colon W\to X$
of schemes of finite type over $k$,
we have a commutative diagram
\begin{equation}
\begin{CD}
K(X,\Lambda)
@>>>
\widetilde F(X)\\
@V{h^*}VV@VV{h^*}V\\
K(W,\Lambda)
@>>>
\widetilde F(W).
\end{CD}
\label{eqKXW}
\end{equation}
The proof of
Lemma \ref{lmEP}.2 also shows that,
for a separated morphism $f\colon X\to Y$
of schemes of finite type over $k$,
we have a commutative diagram
\begin{equation}
\begin{CD}
K(X,\Lambda)
@>>>
\widetilde F(X)\\
@V{f_!}VV@VV{f_!}V\\
K(Y,\Lambda)
@>>>
\widetilde F(Y).
\end{CD}
\label{eqKXY}
\end{equation}

Let $C$ be a connected smooth curve
over a perfect field
$k$ and ${\cal F}$
be a constructible
complex of $\Lambda$-modules on $C$.
Let ${\rm rank}\ {\cal F}$
denote the alternating sum
$\sum_q(-1)^q{\rm rank}\
{\cal H}^q{\cal F}|_U$
on a dense open subscheme
$U\subset C$
where the restrictions
${\cal H}^q{\cal F}|_U$
of cohomology sheaves
are locally constant.
For a closed point
$v\in C$,
the Artin conductor
is defined by
\begin{equation}
a_v{\cal F}
=
{\rm rank}\ {\cal F}
-
\dim {\cal F}_{\bar v}
+{\rm Sw}_v{\cal F}
\label{eqavF}
\end{equation}
where $\bar v$
denotes a geometric point
above $v$ and
${\rm Sw}_v$
denotes the alternating sum
of the Swan conductor.

\begin{lem}\label{lmax}
Let $X$ be a scheme
of finite type over a perfect field $k$.
Let $\Lambda$ and $\Lambda'$
be finite fields of characteristics
different from that of $k$ and
let ${\cal F}$ and ${\cal F}'$
be constructible complexes
of $\Lambda$-modules and 
of $\Lambda'$-modules respectively
with universally the same
Euler-Poincar\'e characteristics.

Let $\bar C$ be a proper
smooth curve over $k$,
let $j\colon C\to \bar C$ be 
the open immersion of a
dense open subscheme
and $g\colon C\to X$ be a morphism
over $k$.
Then, for a closed point
$v\in \bar C$,
we have an equality of
the Artin conductors
\begin{equation}
a_vj_!g^*{\cal F}=
a_vj_!g^*{\cal F}'.
\label{eqav}
\end{equation}
\end{lem}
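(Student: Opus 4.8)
First I would reduce, exactly as in the proofs of Lemma~\ref{lmEP} and Lemma~\ref{lmax}, to the case where $k$ is algebraically closed; I would also treat the connected components of $\bar C$ separately, so that $\bar C$ is connected, and I may assume $C\neq\varnothing$, the statement being trivial otherwise. Put $\mathcal{H}=j_!g^*\mathcal{F}$ and $\mathcal{H}'=j_!g^*\mathcal{F}'$, constructible complexes on $\bar C$ with $\mathcal{H}|_C=g^*\mathcal{F}$ and $\mathcal{H}_{\bar w}=0$ for $w\notin C$. Evaluating the hypothesis on the inclusion of a closed point of the nonempty open locus of $C$ over which all cohomology sheaves of $g^*\mathcal{F}$ and of $g^*\mathcal{F}'$ are locally constant gives $\rank\mathcal{H}=\rank\mathcal{H}'$; write $r$ for this common value. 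Now for any constructible complex $\mathcal{K}$ on $\bar C$ the Grothendieck--Ogg--Shafarevich formula reads $\chi_c(\bar C,\mathcal{K})=(\rank\mathcal{K})\,\chi(\bar C)-\sum_{w}a_w\mathcal{K}$ (with $\chi(\bar C)$ the Euler number of the curve), as one sees by combining the $j_!$-version for a lisse sheaf with the excision triangle and the definition \eqref{eqavF}. Applying this to $\mathcal{H}$ and $\mathcal{H}'$, and using $\chi_c(\bar C,\mathcal{H})=\chi_c(C,g^*\mathcal{F})=\chi_c(C,g^*\mathcal{F}')=\chi_c(\bar C,\mathcal{H}')$, I obtain $\sum_w a_w\mathcal{H}=\sum_w a_w\mathcal{H}'$; call this common value $\sigma$. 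If $\operatorname{char}k=0$ the proof is finished, since then $\Sw_v\equiv 0$, so $a_v\mathcal{H}=r-\dim\mathcal{H}_{\bar v}$, and $\dim\mathcal{H}_{\bar v}$ (which is $0$ or $\dim(g^*\mathcal{F})_{\bar v}$) agrees for $\mathcal{F}$ and $\mathcal{F}'$ by hypothesis. So assume henceforth $\operatorname{char}k=p>0$.

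Fix a closed point $v\in\bar C$ and let $\Lambda_0$ be strictly larger than all the ramification breaks at $v$ of the generic parts of $\mathcal{H}$ and of $\mathcal{H}'$. I would choose an integer $m\ge\max(\Lambda_0,2g)$ prime to $p$, $g$ being the genus of $\bar C$. By Riemann--Roch there is a rational function $h$ on $\bar C$ whose only pole is at $v$, of order exactly $m$, and then $y^p-y=h$ defines a connected finite morphism $f\colon\bar C'\to\bar C$ of degree $p$ of smooth projective curves, étale over $\bar C\smallsetminus\{v\}$ and totally wildly ramified over $v$, with a single point $v'$ above $v$, of break $m$ and with different exponent $(p-1)(m+1)$ at $v'$; hence $\chi(\bar C')=p\,\chi(\bar C)-(p-1)(m+1)$. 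The essential local point is that, since $m$ exceeds every break of $\mathcal{H}$ at $v$, pulling back along this degree-$p$ extension of break $m$ leaves the Swan conductor unchanged: the breaks of $f^*\mathcal{H}$ at $v'$ are the images of those of $\mathcal{H}$ at $v$ under the Herbrand function of the extension, which is the identity below the break $m$; thus $\Sw_{v'}(f^*\mathcal{H})=\Sw_v\mathcal{H}$, and likewise $\Sw_{v'}(f^*\mathcal{H}')=\Sw_v\mathcal{H}'$. As $(f^*\mathcal{H})_{\bar v'}=\mathcal{H}_{\bar v}$ and $\rank f^*\mathcal{H}=r$, this yields $a_{v'}(f^*\mathcal{H})=r-\dim\mathcal{H}_{\bar v}+\Sw_v\mathcal{H}=a_v\mathcal{H}$, and similarly $a_{v'}(f^*\mathcal{H}')=a_v\mathcal{H}'$.

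Then I would apply the Grothendieck--Ogg--Shafarevich formula on $\bar C'$ to $f^*\mathcal{H}$. Over a closed point $w\neq v$ of $\bar C$ the morphism $f$ is étale, so each of the $p$ points $w'$ above $w$ has $a_{w'}(f^*\mathcal{H})=a_w\mathcal{H}$; combined with the previous paragraph, $\sum_{w'\in\bar C'}a_{w'}(f^*\mathcal{H})=a_v\mathcal{H}+p\sum_{w\neq v}a_w\mathcal{H}=a_v\mathcal{H}+p(\sigma-a_v\mathcal{H})$, so
\[
\chi_c(\bar C',f^*\mathcal{H})=r\,\chi(\bar C')+(p-1)\,a_v\mathcal{H}-p\,\sigma .
\]
The same identity holds for $\mathcal{H}'$, with the same $r$, the same $\chi(\bar C')$ and the same $\sigma$. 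Finally $\chi_c(\bar C',f^*\mathcal{H})=\chi_c\bigl(f^{-1}(C),(g\circ f)^*\mathcal{F}\bigr)$, and since $f^{-1}(C)\xrightarrow{f}C\xrightarrow{g}X$ is a morphism from a separated $k$-scheme of finite type, this Euler--Poincar\'e characteristic equals the corresponding one for $\mathcal{F}'$ by hypothesis. Equating the two expressions, all terms cancel except $(p-1)\,a_v\mathcal{H}$ and $(p-1)\,a_v\mathcal{H}'$, giving $(p-1)\,a_v\mathcal{H}=(p-1)\,a_v\mathcal{H}'$, hence $a_v\mathcal{H}=a_v\mathcal{H}'$ since $p\ge 2$.

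The step I expect to cost the most care is the local equality $\Sw_{v'}(f^*\mathcal{H})=\Sw_v\mathcal{H}$ for $m$ large: a break exactly equal to $m$ could drop under restriction, which is why $m$ is taken strictly above all the breaks, and then the explicit shape of the Herbrand function of an Artin--Schreier extension (the identity below its break) gives the claim. The remaining ingredients — the Grothendieck--Ogg--Shafarevich formula, the existence of the Artin--Schreier cover via Riemann--Roch, and the observation that, by the choice of $f$, the only part of $\chi_c(\bar C',f^*\mathcal{K})$ depending on $\mathcal{K}$ beyond $r$ and $\sigma$ is a nonzero multiple of $a_v\mathcal{K}$ — are routine.
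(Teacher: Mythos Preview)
Your proof is correct but takes a genuinely different route from the paper's. The paper first reduces (via Lemma~\ref{lmEP}) to $X=C$ with $\mathcal F,\mathcal F'$ locally constant, then by approximation chooses a finite cover $\bar Z\to\bar C$ \emph{\'etale at $v$} such that the pullbacks of $\mathcal F,\mathcal F'$ become unramified at every point of $\bar Z$ not lying over $v$; a single application of Grothendieck--Ogg--Shafarevich on $\bar Z$ then yields $[Z:X]\cdot\Sw_v\mathcal F=\rank\mathcal F\cdot\chi_c(Z)-\chi_c(Z,\mathcal F)$, and comparison with the same formula for $\mathcal F'$ finishes. You do the opposite: your cover $\bar C'\to\bar C$ is \emph{\'etale away from $v$} and wildly ramified only at $v$ with break above those of $\mathcal H$, so that the local contribution at $v$ is unchanged while all other contributions are multiplied by $p$; subtracting the two GOS formulas isolates $(p-1)a_v$. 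The paper's approach is lighter in that it only uses invariance of Swan under \'etale pullback, at the cost of an existence statement for the cover $\bar Z$ (``by approximation''); your approach is entirely explicit (Artin--Schreier via Riemann--Roch) but requires the local fact that $\Sw_{v'}(f^*\mathcal H)=\Sw_v\mathcal H$ when $m$ exceeds all breaks.

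On that local fact, your Herbrand explanation is correct but a little compressed: what you need beyond $\psi_{L/K}(\lambda)=\lambda$ for $\lambda<m$ is that the breaks do not \emph{drop} under restriction. This holds because for $\lambda<m$ one has $G_L^{(\lambda)}=G_L\cap G_K^{(\lambda)}$ and, since $\lambda+\epsilon<m$, the subgroup $G_K^{(\lambda+\epsilon)}$ still surjects onto $\Gal(L/K)$; hence $G_K^{(\lambda)}=G_K^{(\lambda+\epsilon)}\cdot(G_L\cap G_K^{(\lambda)})$, and as $G_K^{(\lambda+\epsilon)}$ already acts trivially on the break-$\lambda$ part of $M$, its $(G_L\cap G_K^{(\lambda)})$-invariants coincide with its $G_K^{(\lambda)}$-invariants (namely $0$). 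You flagged this as the step needing care, and it is --- I would spell it out rather than assert that ``breaks map under the Herbrand function''. (A minor point: avoid the symbol $\Lambda_0$ for your bound on the breaks, since $\Lambda$ is already the coefficient field.)
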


\proof{
We may assume $k$ is algebraically closed.
Since the dimensions of
fibers of
$j_!g^*{\cal F}$ and
$j_!g^*{\cal F}'$ at each 
points are equal,
it suffices to show
the equality
of the Swan conductors:
${\rm Sw}_vg^*{\cal F}=
{\rm Sw}_vg^*{\cal F}'$.
We may further assume $X=C$
by Lemma \ref{lmEP},
 $v\notin X$
and that
${\cal F}$ and ${\cal F}'$
are locally constant.

Let $\bar X$ be a smooth compactification
of $X$.
By approximation,
there exists 
a finite morphism
$\bar Z\to \bar X$ of proper smooth curves
\'etale at $v$ 
such that the pull-backs of
${\cal F}$ and ${\cal F}'$
on $Z=X\times_{\bar X}\bar Z$
are unramified along $\bar Z\sm 
(\bar Z\times_{\bar X}(X\cup\{v\}))$.
Then, by the Grothendieck-Ogg-Shafarevich
formula \cite[Th\'eor\`eme 7.1]{sga5}, we have
$[Z:X]\cdot {\rm Sw}_v{\cal F}=
{\rm rank}\ {\cal F}\cdot
\chi_c(Z)-
\chi_c(Z,{\cal F})$
and similarly for
${\cal F}'$.
Thus the assertion follows.
\qed}

\begin{prop}\label{prccCC}
Let $X$ be a smooth scheme
over a perfect field $k$
and let $\bar k$ be an algebraic
closure of $k$.
Let $\Lambda$ and $\Lambda'$
be finite fields of characteristics
different from that of $k$ and
let ${\cal F}$ and ${\cal F}'$
be constructible complexes
of $\Lambda$-modules and 
of $\Lambda'$-modules respectively.

If ${\cal F}$ and ${\cal F}'$
have universally the same
Euler-Poincar\'e characteristics, 
we have
\begin{equation}
CC{\cal F}=
CC{\cal F}'.
\label{eqccCC}
\end{equation}
\end{prop}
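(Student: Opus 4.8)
The plan is to reduce the equality of characteristic cycles to the local Milnor formula (\ref{eqM}) of Theorem \ref{thmmil}, evaluated on a good pencil supplied by Lemma \ref{goodpencil}, and then to recover the total dimension of vanishing cycles from Euler--Poincar\'e characteristics via Grothendieck--Ogg--Shafarevich. First I would reduce to the case where $X$ is projective smooth and equidimensional, and where $k$ is algebraically closed: the characteristic cycle is \'etale-local on $X$, so we may shrink $X$ to an affine open and then choose a projective smooth compactification on which ${\cal F}$ and ${\cal F}'$ extend by zero (the property of having universally the same Euler--Poincar\'e characteristics is preserved under $j_!$ by Lemma \ref{lmEP}.2); and $CC$ commutes with base change to $\bar k$. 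Set $C=SS{\cal F}\cup SS{\cal F}'$; this is a closed conical subset, equidimensional of dimension $n=\dim X$, on which both complexes are micro-supported. Write $CC{\cal F}=\sum_a m_aC_a$ and $CC{\cal F}'=\sum_a m'_aC_a$ over the common irreducible components $C_a$ of $C$. It suffices to prove $m_a=m'_a$ for each $a$.

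Next I would fix an irreducible component $C_b$ and produce a single test configuration detecting $m_b-m'_b$. After replacing a very ample ${\cal L}$ and $E\subset\Gamma(X,{\cal L})$ so that conditions (E) and (C) hold, apply Lemma \ref{goodpencil} to obtain a line $L\subset{\mathbf P}^\vee$ with properties (1)--(7). Properties (1), (2) make the blow-up $\pi_L\colon X_L\to X$ well defined and $C$-transversal, so ${\cal G}=\pi_L^*{\cal F}$ and ${\cal G}'=\pi_L^*{\cal F}'$ are micro-supported on $\pi_L^\circ C$, and by the pull-back compatibility of $CC$ under $C$-transversal maps (\cite{CC}) we have $CC{\cal G}=\pi_L^! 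CC{\cal F}$, similarly for ${\cal G}'$; so it is enough to compare $CC{\cal G}$ and $CC{\cal G}'$. Properties (3)--(5) say $p_L\colon X_L\to L$ has only isolated characteristic points with respect to $\pi_L^\circ C$, at most one over each point of $L$, none on the exceptional locus; and properties (6), (7) say each $\pi_L^\circ C_a$ does contribute an isolated characteristic point $x_a\in X_L$ and these are all distinct. At each such $x_a$, lying over a point $y_a\in L$, the Milnor formula gives
\begin{equation}
-\dimtot\phi_{x_a}(\pi_L^*{\cal F},p_L)
=(CC{\cal G},dp_L)_{T^*X_L,x_a}
=m_a\cdot(C'_a,dp_L)_{x_a},\notag
\end{equation}
where $C'_a$ is the component of $\pi_L^\circ C$ over $C_a$ and the local intersection number $(C'_a,dp_L)_{x_a}$ is a fixed nonzero integer depending only on the geometry of the pencil, not on ${\cal F}$; the same identity holds for ${\cal G}'$ with $m'_a$. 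Hence $m_b-m'_b$ is, up to this fixed nonzero factor, the difference $\dimtot\phi_{x_b}(\pi_L^*{\cal F}',p_L)-\dimtot\phi_{x_b}(\pi_L^*{\cal F},p_L)$ of total dimensions of vanishing cycles at $x_b$.

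It remains to show these total dimensions agree, and here Euler characteristics enter. By Lemma \ref{lmEP}.1 the pull-backs $\pi_L^*{\cal F}$ and $\pi_L^*{\cal F}'$ have universally the same Euler--Poincar\'e characteristics, hence so do the restrictions to each fiber $X_{L,y}$ of $p_L$; applying the Grothendieck--Ogg--Shafarevich / Lefschetz-type formula comparing $\chi_c$ of nearby and special fibers (as packaged in Lemma \ref{lmax} after composing with $p_L$ and using $Rp_{L*}=Rp_{L!}$ for the proper morphism $p_L$), we get that the Artin conductor of $Rp_{L!}{\cal G}$ at each closed point of $L$ equals that of $Rp_{L!}{\cal G}'$. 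On the other hand $a_y Rp_{L!}{\cal G}$ is the sum over the fiber $X_{L,y}$ of the local total dimensions $\dimtot\phi_x(\pi_L^*{\cal F},p_L)$ (by the standard decomposition of vanishing cycles and the fact that away from the isolated characteristic point the morphism is locally acyclic); by property (4) there is at most one such point over each $y$. Therefore $\dimtot\phi_{x_b}(\pi_L^*{\cal F},p_L)=\dimtot\phi_{x_b}(\pi_L^*{\cal F}',p_L)$, giving $m_b=m'_b$. Since $b$ was arbitrary, $CC{\cal F}=CC{\cal F}'$.

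The main obstacle is the bookkeeping in the last paragraph: identifying the Artin conductor $a_y Rp_{L!}({\cal G})$ with the local total dimension of vanishing cycles at the isolated characteristic point, i.e. checking that the ``$\mathrm{rank}-\dim_{\bar v}+\mathrm{Sw}$'' at $y\in L$ collects exactly the $\dimtot\phi$ contribution and that local acyclicity away from $x_b$ forces the other contributions to vanish. This requires care that the open subscheme of $L$ on which $Rp_{L!}{\cal G}$ is locally constant is chosen compatibly for both sheaves, and that there is no extra contribution from the boundary where ${\cal F}$ was extended by zero — handled by property (5) placing all characteristic points away from the exceptional divisor and by shrinking $L$ to avoid the finitely many bad points. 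Everything else is an assembly of the cited transversality and pull-back lemmas from \cite{CC} together with Lemmas \ref{goodpencil}, \ref{lmEP}, \ref{lmax}.
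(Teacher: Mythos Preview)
Your proposal is correct and follows essentially the same route as the paper: reduce to $k=\bar k$ and $X$ projective, set $C=SS{\cal F}\cup SS{\cal F}'$, pick a good pencil via Lemma~\ref{goodpencil}, apply the Milnor formula at an isolated characteristic point over each component $C_a$, and identify the total dimension of vanishing cycles with the Artin conductor of $Rp_{L*}\pi_L^*{\cal F}$ on $L$, whose equality for ${\cal F}$ and ${\cal F}'$ comes from Lemmas~\ref{lmEP} and~\ref{lmax}. One cosmetic point: rather than invoking a ``projective smooth compactification'' (which need not exist in positive characteristic), the paper simply embeds an affine open of $X$ into ${\mathbf P}^n$ and extends by zero there; and your detour through $CC{\cal G}=\pi_L^!CC{\cal F}$ is unnecessary since property~(5) lets you apply the Milnor formula directly on $X$ via the open immersion $X_L^\circ\hookrightarrow X$.
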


\proof{
We may assume that
$k$ is algebraically closed.
Since the question is local,
we may assume $X$ is affine.
We take an immersion
$i\colon X\to {\mathbf A}^n
\subset {\mathbf P}^n$.
Since
$Ri_!{\cal F}$ and
$Ri_!{\cal F}'$
have universally the same
Euler-Poincar\'e characteristics
by Lemma \ref{lmEP}.2,
we may assume $X$ is projective.

We take an immersion 
$X\to {\mathbf P}$
such that the pull-back
${\cal L}$ of ${\cal O}(1)$
satisfies the conditions (E) and (C).
Let $C=SS{\cal F}\cup SS{\cal F}'$ 
be the union of the
singular supports.
Let $C_a$ be an irreducible component
and we show that
the coefficients $m_a$ and $m'_a$
of $C_a$ in $CC{\cal F}$ and
$CC{\cal F}'$ are equal.

Let $\pi_L\colon X_L\to X$
and $p_L\colon X_L\to L$
be morphisms satisfying the conditions
(1)--(7) in Lemma \ref{goodpencil}.
Let $x\in X_L\cap {\mathbf P}
(\widetilde C_a)$ be
an isolated characteristic point of
$p_L\colon X_L\to L$
and $y=p_L(x)$ be the image.
By the Milnor formula,
we have
$$-a_yRp_{L*}\pi_L^*{\cal F}=
-\dim{\rm tot}\phi_x(\pi_L^*{\cal F},p_L)
=
(CC {\cal F},dp_L)_{T^*X,x}
=
m_a\cdot (C_a,dp_L)_{T^*X,x}$$
and similarly for ${\cal F}'$.
Since
$a_yRp_{L*}\pi_L^*{\cal F}=
a_yRp_{L*}\pi_L^*{\cal F}'$
by Lemma \ref{lmEP} and Lemma
\ref{lmax},
we have
$m_a\cdot (C_a,dp_L)_{T^*X,x}
=m'_a\cdot (C_a,dp_L)_{T^*X,x}$.
Since $(C_a,dp_L)_{T^*X,x}\neq 0$,
we obtain
$m_a=m'_a$ as required.
\qed}
\medskip

Let $X$ be a scheme of
finite type over $k$.
Assume that
there exists a closed immersion
$X\to M$
to a smooth scheme over $k$
and let
$cc_X\colon K_0(X,\Lambda)
\to CH_{\bullet}(X)$
denote the morphism
defined by characteristic classes
\cite[Definition 6.7]{CC}.
Let $K_0(X,\Lambda)_0
\subset K_0(X,\Lambda)$
denote the kernel of the morphism (\ref{eqKF}).
Then, 
Proposition \ref{prccCC}
implies that the morphism
$cc_X\colon K_0(X,\Lambda)
\to CH_{\bullet}(X)$
factors through the quotient
$K_0(X,\Lambda)
/K_0(X,\Lambda)_0$.

\section{Brauer traces and
representations of $p$-groups}

We briefly recall the definition
of the Brauer trace of a semi-simple
automorphism of a vector space
over a finite field.
Let $\Lambda$ be a finite field
of characteristic $\ell$
and $E=W(\Lambda)[\frac1\ell]$
be the fraction field of
the ring of Witt vectors.
Let $M$ be a $\Lambda$-vector
space of finite dimension $n$
and let $\sigma$ be an automorphism
of $M$ of order prime to $\ell$.
Decompose 
the characteristic polynomial
$\Phi(T)=\det(T-\sigma:M)$
as
$\Phi(T)=\prod_{i=1}^n(T-a_i)$
and let $\tilde a_i$ be
the unique lifting of $a_i$ 
as a root of $1$ of 
order prime to $\ell$ in
a finite unramified extension of $E$.
Then, the Brauer trace
${\rm Tr}^{Br}(\sigma, M)\in E$
is defined by
\begin{equation}
{\rm Tr}^{Br}(\sigma, M)
=
\sum_{i=1}^n\tilde a_i.
\label{eqdfBr}
\end{equation}

\begin{lem}\label{lmsig}
Let $\Lambda$ be a finite field,
$M$ be a $\Lambda$-vector space
of finite dimension
and let $\sigma$ be an automorphism
of $M$ of order a power of
prime $p$ invertible in $\Lambda$.
Then, for a subfield $E$ of
the fraction field of $W(\Lambda)$
of finite degree over ${\mathbf Q}$
containing ${\rm Tr}^{Br}(\sigma, M)$,
we have
\begin{equation}
\dfrac1{[E:{\mathbf Q}]}
{\rm Tr}_{E/{\mathbf Q}}
{\rm Tr}^{Br}(\sigma, M)
=
\dfrac1{p-1}
(p\cdot\dim M^{\sigma}
-\dim M^{\sigma^p}).
\label{eqTrB}
\end{equation}
\end{lem}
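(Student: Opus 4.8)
The plan is to reduce the normalized trace on the left to an average of Galois conjugates inside a cyclotomic field, and then to evaluate that average by a count of roots of unity. Let $\ell$ be the characteristic of $\Lambda$ and let $p^r$ be the order of $\sigma$; by hypothesis $p\neq\ell$. Since $\sigma^{p^r}=1$, every root $a_i$ of the characteristic polynomial $\det(T-\sigma)$ of $\sigma$ on $M$ is a $p^r$-th root of unity, so its lift $\tilde a_i$ is a $p^r$-th root of unity of exact order equal to that of $a_i$ (reduction modulo $\ell$ being bijective on roots of unity of order prime to $\ell$), and hence $t:={\rm Tr}^{Br}(\sigma,M)=\sum_i\tilde a_i$ lies in $\mathbf{Q}(\zeta_{p^r})$. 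For any number field $E$ with $t\in E$, transitivity of the trace together with ${\rm Tr}_{E/\mathbf{Q}(t)}(t)=[E:\mathbf{Q}(t)]\,t$ gives $\frac1{[E:\mathbf{Q}]}{\rm Tr}_{E/\mathbf{Q}}(t)=\frac1{[\mathbf{Q}(t):\mathbf{Q}]}{\rm Tr}_{\mathbf{Q}(t)/\mathbf{Q}}(t)$; since $\mathbf{Q}(t)\subseteq\mathbf{Q}(\zeta_{p^r})$ is a subextension of an abelian extension, the same argument applied with $\mathbf{Q}(\zeta_{p^r})$ in place of $E$ identifies this with
\[\frac1{[E:\mathbf{Q}]}{\rm Tr}_{E/\mathbf{Q}}(t)=\frac1{\varphi(p^r)}\sum_{c\in(\mathbf{Z}/p^r\mathbf{Z})^{\times}}\gamma_c(t),\]
where $\gamma_c\in\Gal(\mathbf{Q}(\zeta_{p^r})/\mathbf{Q})$ sends $\zeta_{p^r}$ to $\zeta_{p^r}^{c}$. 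In particular the left-hand side is independent of the admissible choice of $E$.

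Next comes the eigenvalue bookkeeping. Since $\sigma$ has order prime to $\ell$ it is semisimple, so writing $n_s$ for the number of $a_i$ of exact order $p^s$ $(0\le s\le r)$ one has $\dim M^{\sigma}=n_0$ and $\dim M^{\sigma^p}=n_0+n_1$, because $a_i^p=1$ precisely when $a_i$ has order $1$ or $p$ and $\sigma^p$ is again semisimple; the same counts hold for the $\tilde a_i$. Therefore the right-hand side of (\ref{eqTrB}) equals $\frac1{p-1}\bigl(p\,n_0-(n_0+n_1)\bigr)=n_0-\frac{n_1}{p-1}$, and it remains to show that the cyclotomic average above equals $n_0-\frac{n_1}{p-1}$.

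Finally I would evaluate the average term by term. Grouping the $\tilde a_i$ by exact order and using that $\gamma_c$ permutes the primitive $p^s$-th roots of unity, it suffices to compute $\sum_{c\in(\mathbf{Z}/p^r\mathbf{Z})^{\times}}\gamma_c(\zeta)$ for a fixed primitive $p^s$-th root $\zeta$. For $s=0$ it is $\varphi(p^r)$; for $1\le s\le r$ the reduction $(\mathbf{Z}/p^r\mathbf{Z})^{\times}\to(\mathbf{Z}/p^s\mathbf{Z})^{\times}$ is surjective with all fibres of size $p^{r-s}$, so the sum is $p^{r-s}\sum_{d\in(\mathbf{Z}/p^s\mathbf{Z})^{\times}}\zeta^{d}=p^{r-s}\mu(p^s)$, which equals $-p^{r-1}$ for $s=1$ and $0$ for $s\ge2$. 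Dividing by $\varphi(p^r)=p^{r-1}(p-1)$ and summing with the multiplicities $n_s$ gives exactly $n_0-\frac{n_1}{p-1}$, and the degenerate case $r=0$ (where $\sigma=\mathrm{id}$) is immediate. Since the identity is elementary there is no serious obstacle; the point requiring the most care is the first step — verifying that the normalized trace genuinely does not depend on $E$ and that it may be computed inside $\mathbf{Q}(\zeta_{p^r})$ — after which everything reduces to the classical evaluation $\sum_{d\in(\mathbf{Z}/p^s\mathbf{Z})^{\times}}\zeta^d=\mu(p^s)$ and the dimension count above.
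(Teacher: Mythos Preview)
Your proof is correct and rests on the same cyclotomic trace computation as the paper's. The paper compresses the argument by d\'evissage to $\dim M=1$ with $E=\mathbf{Q}(\zeta)$ for the single eigenvalue $\zeta$ and then checks the three cases $\zeta=1$, order $p$, and higher order directly, whereas you keep all eigenvalues together grouped by exact order; in particular you make explicit the independence of the normalized trace on the choice of $E$, which the paper's d\'evissage to $E=\mathbf{Q}(\zeta)$ uses without comment.
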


\proof{
By devissage, 
we may assume $M$ is
of dimension 1
and $E$ is generated by
the lifting of the eigenvalue $\zeta$ of $\sigma$.
If $\zeta=1$,
the equality (\ref{eqTrB}) is
$1=\frac1{p-1}(p-1)$.
If $\zeta$ is of order $p$,
it is
$\frac1{p-1}(-1)=
\frac1{p-1}(0-1)$.
If otherwise, it is $0=0$.
\qed}
\medskip

We study representations
of a $p$-group.
Let $p$ be a prime number,
$G$ be a finite $p$-group of order $p^n$
and $\Lambda$ be a finite field
of characteristic different from $p$.
Let $K(G,\Lambda)$
(resp.\ $K(G,{\mathbf Q})$)
be the Grothendieck group of
$\Lambda$-representations
(resp.\ ${\mathbf Q}$-representations)
of $G$.
The subfield $E$ of the fraction field of
the ring of Witt vectors $W(\Lambda)$
generated over ${\mathbf Q}$
by the values of Brauer traces
${\rm Tr}^{Br}(\sigma:M)$
for $\sigma\in G$ and 
$\Lambda$-representations $M$ of $G$
is a subfield of ${\mathbf Q}(\zeta_{p^n})$.

Let ${\rm Cent}(G,E)$
(resp.\ ${\rm Cent}(G,{\mathbf Q})$)
denote the space of central functions.
Then, the Brauer traces define an injection
${\rm Tr}^{Br}\colon
K(G,\Lambda)\to {\rm Cent}(G,E)$.
The image of the injection
${\rm Tr}\colon
K(G,{\mathbf Q})\otimes{\mathbf Q}
\to {\rm Cent}(G,{\mathbf Q})$
consists of central functions $f\colon G
\to {\mathbf Q}$ satisfying
$f(\sigma)=f(\tau)$
for $\sigma,\tau\in G$ 
satisfying $\langle\sigma\rangle
=\langle\tau\rangle$
by \cite[13.1 Th\'eor\`eme 30]{Se}.

Let $A$ be the center of
the group algebra ${\mathbf Z}[G]$.
Since ${\mathbf Q}[G]$ 
is semi-simple,
we have a canonical isomorphism
$K(G,{\mathbf Q})\to \Gamma({\rm Spec}\ A
\otimes{\mathbf Q},{\mathbf Z})$.
Further,
the center $A/\ell A$ is reduced 
for every prime $\ell\neq p$
since ${\mathbf F}_\ell[G]$ 
is semi-simple
and
the ring $A[\frac1p]$ is finite \'etale over
${\mathbf Z}[\frac1p]$.
Hence the restriction
$\Gamma({\rm Spec}\ A[\frac1p],{\mathbf Z})\to
\Gamma({\rm Spec}\ A
\otimes{\mathbf Q},{\mathbf Z})$
is an isomorphism.
Let $A[\frac1p]=\prod_{i\in I}A_i$
be the decomposition
into product of integral domains
and $e_i
\in A[\frac1p]\subset {\mathbf Z}[G][\frac1p]$
be the corresponding idempotents.
Let $V_i$ be an irreducible
${\mathbf Q}$-representation
of $G$ satisfying $e_iV_i=V_i$
and let $\chi_i\colon G\to {\mathbf Z}$
be the character.
Then, 
the characters $\chi_i$ form
an orthogonal basis of
the image of
$K(G,{\mathbf Q})\otimes{\mathbf Q}
\to {\rm Cent}(G,{\mathbf Q})$
with respect to the inner product
\cite[2.2 Remarque]{Se}.
The authors learned the following
fact from Beilinson.

\begin{lem}\label{lmsM}
Let $p$ be a prime number and
$G$ be a finite group of order $p^n$.
Let $\Lambda$ be a finite field
of characteristic $\neq p$
and we consider morphisms
\begin{equation}
\begin{CD}
K(G,\Lambda)
@>{{\rm Tr}^{Br}}>>{\rm Cent}(G,E)\\
@.@VV{\frac1{[E: Q]}{\rm Tr}_{E/{\mathbf Q}}}V
\\
K(G,{\mathbf Q})
@>{\rm Tr}>>
{\rm Cent}(G,{\mathbf Q})
\end{CD}
\label{eqsMQ}
\end{equation}
in the notation above.
Let $M$ be a $\Lambda$-representation
of $G$
and for $\sigma\in G$,
let $M^{\sigma}$ denote the $\sigma$-fixed part.

{\rm 1.}
The image 
$s_M\in 
{\rm Cent}(G,{\mathbf Q})$ of
the class $[M]
\in K(G,\Lambda)$
lies in the image of the injection
${\rm Tr}\colon
K(G,{\mathbf Q})\otimes {\mathbf Q}
\to
{\rm Cent}(G,{\mathbf Q})$
and is given by
\begin{equation}
s_M(\sigma)
=\frac1{p-1}(p\cdot \dim M^\sigma
-\dim M^{\sigma^p}).
\label{eqsMs}
\end{equation}

{\rm 2.}
Let $A$ be the center of
the group algebra ${\mathbf Z}[G]$.
Then, we have
\begin{equation}
s_M=
\sum_i \dfrac{\dim_\Lambda e_iM}
{\dim V_i}\chi_i
\label{eqsM}
\end{equation}
where $e_i\in A[\frac1p]$ runs through
primitive idempotents.
In other words,
under the identification
$\Gamma({\rm Spec}\ A[\frac1p],{\mathbf Q})
=K(G,{\mathbf Q})\otimes{\mathbf Q}
\subset
{\rm Cent}({\mathbf G},{\mathbf Q})$,
the locally constant function on
${\rm Spec}\ A[\frac1p]$ corresponding
to $s_M$ takes values 
$\dim_\Lambda e_iM/\dim V_i$
on the components
${\rm Spec}\ A[\frac1p]e_i$.
\end{lem}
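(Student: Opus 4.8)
The plan is to handle the two assertions in turn: assertion 1 drops out of Lemma \ref{lmsig} almost for free, and assertion 2 is then bootstrapped from it by an orthogonality argument among central functions.

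For assertion 1 I will fix $\sigma\in G$ and note that ${\rm Tr}^{Br}(\sigma,M)$ depends only on $M$ regarded as a $\Lambda$-vector space equipped with the automorphism $\sigma$, whose order is a power of $p$ and hence invertible in $\Lambda$. Since $E$ is, by its definition, a finite extension of ${\mathbf Q}$ inside the fraction field of $W(\Lambda)$ and contains ${\rm Tr}^{Br}(\sigma,M)$, Lemma \ref{lmsig} applies as it stands and yields $s_M(\sigma)=\frac1{[E:{\mathbf Q}]}{\rm Tr}_{E/{\mathbf Q}}{\rm Tr}^{Br}(\sigma,M)=\frac1{p-1}(p\cdot\dim M^\sigma-\dim M^{\sigma^p})$, which is $(\ref{eqsMs})$. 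From this formula $s_M$ is visibly a ${\mathbf Q}$-valued central function, and $s_M(\sigma)$ depends only on the subgroup $\langle\sigma\rangle$: indeed $M^\sigma=M^{\langle\sigma\rangle}$, and $\sigma^p$ generates the unique subgroup of index $p$ of $\langle\sigma\rangle$, so both $M^\sigma$ and $M^{\sigma^p}$ are unchanged when $\sigma$ is replaced by another generator of $\langle\sigma\rangle$. By the description of the image of ${\rm Tr}$ recalled just before the lemma (via \cite[13.1 Th\'eor\`eme 30]{Se}), this is precisely the condition for $s_M$ to lie in the image of ${\rm Tr}\colon K(G,{\mathbf Q})\otimes{\mathbf Q}\to{\rm Cent}(G,{\mathbf Q})$.

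For assertion 2 I will first reduce to a single block. Both sides of $(\ref{eqsM})$ are additive in $[M]\in K(G,\Lambda)$: the left side because $s$ is a homomorphism, the right side because each $e_i$ is a central idempotent, so $M\mapsto\dim_\Lambda e_iM$ is additive. As the $e_i$ are orthogonal and sum to $1$ in ${\mathbf Z}[G][\frac1p]$, hence in $\Lambda[G]$, we have $M=\bigoplus_i e_iM$, so I may assume $e_{i_0}M=M$ and $e_iM=0$ for $i\neq i_0$ for one index $i_0$. The crux is then that the Brauer character $\beta_M$ of $M$, i.e.\ the central function $\sigma\mapsto{\rm Tr}^{Br}(\sigma,M)$, is supported, as a non-negative integral combination of absolutely irreducible characters of $G$, on the single $\Gal(\bar{\mathbf Q}/{\mathbf Q})$-orbit $O_{i_0}$ cut out by $e_{i_0}$ — the orbit whose sum is $\frac1{m_{i_0}}\chi_{i_0}$, with $m_{i_0}$ the Schur index of $V_{i_0}$. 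Granting this: every Galois conjugate of $\beta_M$ is also supported on $O_{i_0}$, hence so is $s_M$, which is a Galois average of those conjugates; since $O_{i_0}$ is disjoint from $O_i$ for $i\neq i_0$, the function $s_M$ is orthogonal to $\chi_i$ for all $i\neq i_0$, and together with assertion 1 this forces $s_M=c\cdot\chi_{i_0}$ for some $c\in{\mathbf Q}$. Evaluating at the identity, where $(\ref{eqsMs})$ gives $s_M(1)=\dim_\Lambda M$ and $\chi_{i_0}(1)=\dim_{\mathbf Q}V_{i_0}$, gives $c=\dim_\Lambda e_{i_0}M/\dim_{\mathbf Q}V_{i_0}$; since $\dim_\Lambda e_iM=0$ for $i\neq i_0$, this is exactly $(\ref{eqsM})$. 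The final reformulation over ${\rm Spec}\ A[\frac1p]$ then follows because the class of $V_i$ corresponds to the idempotent $e_i$ under $\Gamma({\rm Spec}\ A[\frac1p],{\mathbf Q})=K(G,{\mathbf Q})\otimes{\mathbf Q}$.

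The main obstacle is precisely the support claim in the reduction step: one must trace the central idempotent $e_{i_0}\in A[\frac1p]$ (defined over ${\mathbf Q}$) through its action on the $\Lambda[G]$-module $M$ and onto the absolutely irreducible constituents of $M\otimes_\Lambda\bar\Lambda$, in order to see that those constituents all arise from the rational Galois orbit attached to $e_{i_0}$. This uses that every element of the $p$-group $G$ has order invertible in $\Lambda$ (so that Brauer characters of $\Lambda[G]$-modules are honest non-negative combinations of ordinary irreducible characters), together with the bookkeeping between the blocks of $\Lambda[G]$, the decomposition $A[\frac1p]=\prod_iA_i$, and the fact that $A[\frac1p]$ is finite \'etale over ${\mathbf Z}[\frac1p]$. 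Once this compatibility is in place, the remaining steps — Galois-invariance of $s_M$ forcing proportionality to $\chi_{i_0}$, and pinning down the constant by evaluating at $1$ — are routine.
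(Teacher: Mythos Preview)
Your proof is correct and follows the same structure as the paper's: for assertion 1 both you and the paper invoke Lemma \ref{lmsig} for the formula and \cite[13.1 Th\'eor\`eme 30]{Se} for membership in the image of ${\rm Tr}$; for assertion 2 both reduce to $M=e_{i_0}M$, establish orthogonality of $s_M$ with $\chi_j$ for $j\neq i_0$, and then pin down the scalar by evaluating at $1$. The only difference is expository: the paper asserts the orthogonality in one line without justification, whereas you unpack it via the support of the Brauer character on the Galois orbit $O_{i_0}$, which is a correct and natural way to see it.
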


\proof{
1.
By Lemma \ref{lmsig},
the function $s_M\colon
G\to {\mathbf Q}$ is given by
(\ref{eqsMs}).
By \cite[13.1 Th\'eor\`eme 30]{Se}
and (\ref{eqsMs}),
the function $s_M$
lies in the image of 
$K(G,{\mathbf Q})\otimes {\mathbf Q}$.

2.
We may assume $M=e_iM$.
Then $s_M$ is orthogonal to
$\chi_j$ for $j\neq i$
and hence $s_M$ is a multiple of
$\chi_i$. 
Since $s_M(1)=\dim M$
and $\chi_i(1)=\dim V_i$,
the assertion follows.
\qed}

\section{Same wild ramification}

Let $\overline X$ be a normal scheme 
of finite type over a field $k$
and $X\subset \overline X$ be a dense
open subscheme.
Let $G$ be a finite group and
$W\to X$ be a $G$-torsor.
The normalization $\overline W\to \overline X$
in $W$ carries a natural action of
$G$.
For a geometric point $\bar x$
of $\overline X$, 
the stabilizer $I\subset G$
of a geometric point $\bar w$
of $\overline W$ above $\bar x$
is called an inertia subgroup at $\bar x$.

\begin{defn}\label{dfP}
Let $X$ be a scheme of finite type over a field $k$.
Let $\Lambda$ and $\Lambda'$
be finite fields of
characteristic %$\ell$ and $\ell'$
invertible in $k$.
Let $p\geqq 1$ denote
the characteristic of $k$
if $k$ is of characteristic $\neq0$
and set $p=1$ 
if $k$ is of characteristic $0$.

{\rm 1.}
Assume that $X$ is
normal and separated.
Let ${\cal F}$ and ${\cal F}'$
be  locally constant constructible sheaves
of $\Lambda$-modules
and 
of $\Lambda'$-modules
on $X$ respectively.
We say that
${\cal F}$ and ${\cal F}'$
have the {\em same wild ramification}
if the following condition
is satisfied:

There exists a proper normal scheme
$\overline X$ over $k$
containing $X$ as 
a dense open subscheme
such that 
for every
geometric point $\bar x$ of
$\overline X$,
the following condition is satisfied:

{\rm (W)}
Let $G$ be a finite quotient group
of the inertia group $I_{\bar x}
=\pi_1(\overline X_{(\bar x)}
\times_{\overline X}X,\bar t)$
with respect to a base point $\bar t$
such that 
the pull-backs to 
$\overline X_{(\bar x)}
\times_{\overline X}X$ of ${\cal F}$ and
${\cal F}'$ 
correspond to $G$-modules
$M$ and $M'$ respectively.
Then,
for every element
$\sigma\in G$ of $p$-power order, 
we have an equality
of the dimensions
of the $\sigma$-fixed parts:
\begin{equation}
\dim M^\sigma
=
\dim M^{\prime\sigma}.
\label{eqBr}
\end{equation}

{\rm 2.}
Let ${\cal F}$ and ${\cal F}'$
be constructible complexes
of $\Lambda$-modules
and 
of $\Lambda'$-modules
on $X$ respectively.
We say that
${\cal F}$ and ${\cal F}'$
have the {\em same wild ramification}
if the following condition
is satisfied:

There exists a finite partition
$X=\coprod_{i\in I}X_i$
by locally closed normal 
and separated subschemes
such that for every $q$
and for every $i$,
the restrictions
${\cal H}^q({\cal F})|_{X_i}$
and ${\cal H}^q({\cal F}')|_{X_i}$
are locally constant constructible and
have the same wild ramification
in the sense defined in {\rm 1}.
\end{defn}

Note that $\Lambda$ and 
$\Lambda'$ are allowed
to have the same characteristic.
Since the function $s_M$
(\ref{eqsMs}) is determined by
the function $\sigma\mapsto \dim M^\sigma$
and vice versa,
the equality (\ref{eqBr})
is equivalent to
\begin{equation}
\dim e_iM=\dim e_iM'
\label{eqdi}
\end{equation}
for every primitive idempotent
$e_i$ of the center $A[\frac1p]$
of the group algebra
${\mathbf Z}[P][\frac1p]$
of a $p$-Sylow subgroup $P$ of $G$
by Lemma \ref{lmsM}.

\begin{lem}\label{lmsw}
Let $X$ be a scheme of finite type over a field $k$.
Let $\Lambda$ and $\Lambda'$
be finite fields of
characteristic %$\ell$ and $\ell'$
invertible in $k$.
Let ${\cal F}$ and ${\cal F}'$
be constructible complexes
of $\Lambda$-modules
and 
of $\Lambda'$-modules
on $X$.
Let $h\colon W\to X$
be a morphism of schemes of finite type
over $k$.

If ${\cal F}$ and ${\cal F}'$ have
the same wild ramification,
then the pull-backs
$h^*{\cal F}$ and $h^*{\cal F}'$
also have the same wild ramification.
\end{lem}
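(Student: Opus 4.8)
The plan is to reduce to the case where $\mathcal{F}$ and $\mathcal{F}'$ are locally constant constructible sheaves on a normal separated scheme, and then verify the defining condition (W) by comparing inertia actions after base change. First I would observe that the property ``having the same wild ramification'' for complexes is defined via a finite partition $X=\coprod_{i\in I}X_i$ into locally closed normal separated subschemes, with the restrictions ${\cal H}^q({\cal F})|_{X_i}$ and ${\cal H}^q({\cal F}')|_{X_i}$ locally constant constructible and having the same wild ramification in the sense of Definition \ref{dfP}.1. Since cohomology sheaves commute with pull-back, $h^*$ applied to this data reduces the problem to: given $h\colon W\to X$ and locally constant constructible sheaves $\mathcal{F},\mathcal{F}'$ on a normal separated $X$ with the same wild ramification, show $h^*\mathcal{F}$ and $h^*\mathcal{F}'$ have the same wild ramification on $W$. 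But $W$ itself need not be normal; one first refines by a finite partition of $W$ into locally closed normal separated subschemes on which $h^*\mathcal{F}$ and $h^*\mathcal{F}'$ are locally constant, which is possible since pull-backs of locally constant sheaves are locally constant and every scheme of finite type over $k$ admits such a stratification. Thus it remains to treat the case where $W$ is also normal and separated and $h$ maps a dense open of $W$ into $X$; replacing $W$ by that dense open (which does not change the normalization used in the definition, and the generic behavior of inertia) we may assume $h(W)\subset X$.

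The key step is then the comparison of inertia actions. Choose a proper normal $\overline{X}\supset X$ witnessing the same-wild-ramification condition for $\mathcal{F},\mathcal{F}'$, and choose any proper normal $\overline{W}$ containing $W$ as a dense open subscheme; after replacing $\overline{W}$ by the normalization of the closure of the graph of $h$ in $\overline{W}\times_k\overline{X}$ we may assume $h$ extends to $\bar{h}\colon\overline{W}\to\overline{X}$. For a geometric point $\bar{w}$ of $\overline{W}$ lying over $\bar{x}=\bar{h}(\bar{w})$, the map on local schemes induces a continuous homomorphism of inertia groups $I_{\bar{w}}=\pi_1(\overline{W}_{(\bar{w})}\times_{\overline{W}}W)\to I_{\bar{x}}=\pi_1(\overline{X}_{(\bar{x})}\times_{\overline{X}}X)$, compatible with the monodromy representations defining $h^*\mathcal{F}$ and $\mathcal{F}$ (and similarly for the primes). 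Given a finite quotient $G_W$ of $I_{\bar{w}}$ through which both $h^*\mathcal{F}$ and $h^*\mathcal{F}'$ factor, one enlarges it so that it receives the image of a finite quotient $G$ of $I_{\bar{x}}$ through which both $\mathcal{F}$ and $\mathcal{F}'$ factor; then the $G_W$-modules attached to $h^*\mathcal{F},h^*\mathcal{F}'$ are the restrictions along $I_{\bar{w}}\to I_{\bar{x}}\to G$ of the $G$-modules $M,M'$ attached to $\mathcal{F},\mathcal{F}'$. For any $\sigma\in G_W$ of $p$-power order, its image $\bar{\sigma}\in G$ again has $p$-power order, and $\dim(M_W)^\sigma=\dim M^{\bar\sigma}=\dim M'^{\bar\sigma}=\dim(M'_W)^\sigma$ by condition \eqref{eqBr} for $\mathcal{F},\mathcal{F}'$ at $\bar{x}$. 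This establishes condition (W) for $h^*\mathcal{F},h^*\mathcal{F}'$ at $\bar{w}$.

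The main obstacle I expect is the bookkeeping around normality: neither $W$ nor the extension $\overline{W}$ is given, and one must produce a proper normal compactification $\overline{W}$ together with an extension of $h$, so that inertia groups on $W$ genuinely map to inertia groups on $X$. This is handled by the graph-closure construction together with Nagata compactification and normalization, but it requires care to check that shrinking $W$ to a dense open (needed to arrange $h(W)\subset X$ and to stay in the locally-constant locus) does not affect the condition---which is fine because condition (W) only concerns the generic fiber $\overline{X}_{(\bar x)}\times_{\overline X}X$ and the fundamental group of a dense open substratum surjects onto nothing smaller than what we need. A secondary point is that the partition of $W$ into normal separated locally closed pieces must be compatible, across all $q$ and $i$, with the pull-back of the given partition of $X$; one simply takes a common refinement of the pull-back partition $\{h^{-1}(X_i)\}$ and a stratification of $W$ into normal separated pieces on which all the $h^*{\cal H}^q({\cal F})|$ and $h^*{\cal H}^q({\cal F}')|$ are locally constant. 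With these reductions in place the core inertia comparison is the routine functoriality of the monodromy representation, and the lemma follows.
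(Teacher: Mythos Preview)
Your proposal is correct and follows essentially the same approach as the paper's proof: both reduce by d\'evissage to the case where $X$ is normal separated with ${\cal F},{\cal F}'$ locally constant, then further stratify $W$ into normal pieces, and construct a proper normal compactification $\overline W$ admitting a morphism to $\overline X$ via the normalization of the closure of the graph of $h$ in (a compactification of $W$)$\times\overline X$. The paper's proof is considerably terser---it simply asserts after this construction that $h^*{\cal F}$ and $h^*{\cal F}'$ satisfy the condition in Definition~\ref{dfP}.1---whereas you spell out the inertia comparison $I_{\bar w}\to I_{\bar x}$ and the verification of~\eqref{eqBr} explicitly, which is exactly what is implicit in the paper's final sentence.
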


\proof{
By devissage,
we may assume that
$X$ is normal and separated and that 
${\cal F}$ and ${\cal F}'$
are locally constant constructible sheaves
satisfying the condition in
Definition \ref{dfP}.1.
Let $\bar X$ be a proper
normal scheme containing
$X$ as a dense open subscheme
satisfying the condition in
Definition \ref{dfP}.1.
Further by devissage,
we may assume $W$ is normal
and affine.
Let $W'$ be a projective
normal scheme over $k$
containing $W$ as a dense
open subscheme.
Let $\bar W$ be
the normalization of
the closure of the image of
$W$ in $W'\times \bar X$.
Then, 
$h^*{\cal F}$ and $h^*{\cal F}'$
satisfy the condition in
Definition \ref{dfP}.1.
\qed}
\medskip

We will deduce Proposition \ref{prl}
from the following lemma.

\begin{lem}[{\rm cf.\ \cite[Th\'eor\`eme 2.1]{il}}]\label{prsig}
Let $\bar X$ be a 
proper normal scheme over an
algebraically closed field $k$
of characteristic $p>0$
and let $X\subset \bar X$
be a dense open subscheme.
Let $G$ a finite group
and $W\to X$ be a $G$-torsor.

{\rm 1. (\cite[3.3]{DL})}
The trace
${\rm Tr}(\sigma: H^*_c(W,{\mathbf Z}_\ell))$
is an integer independent of $\ell\neq p$.

{\rm 2.}
Let $\Lambda$ be a finite fields of
characteristic %$\ell$ and $\ell'$
invertible in $k$ and
let ${\cal F}$ be 
a locally constant constructible sheaf
of $\Lambda$-modules
on $X$
such that
the pull-back to $W$ of ${\cal F}$ 
is constant.
Let $M$ be the $G$-module
corresponding to ${\cal F}$.
Let $S\subset G$ be
the subset consisting
of elements of $p$-power order
contained in the inertia groups 
at a geometric point
of $\overline X$.
Then, we have
\begin{equation}
\chi_c(X,{\cal F})=
\frac1{|G|}
\sum_{\sigma\in S}
{\rm Tr}(\sigma: H^*_c(W,{\mathbf Z}_\ell))
\cdot
\dfrac1{p-1}
(p\cdot\dim M^{\sigma}
-\dim M^{\sigma^p}).
\label{eqsig}
\end{equation}
\end{lem}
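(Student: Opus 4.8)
The plan is to express $\chi_c(X,{\cal F})$ by a Lefschetz trace formula over ${\mathbf Q}_\ell$ for a lift of $M$, to show that the resulting traces $\mathrm{Tr}(\sigma\colon H^*_c(W,{\mathbf Z}_\ell))$ vanish unless $\sigma\in S$, and finally to pass from ordinary traces on $N$ to the quantities $\frac1{p-1}(p\dim M^\sigma-\dim M^{\sigma^p})$ by Galois averaging via Lemma \ref{lmsig}. Throughout, $\mathcal{F}_{M'}$ denotes the locally constant sheaf corresponding to a (continuous) $G$‑module $M'$, so that ${\cal F}=\mathcal{F}_M$. Since $\chi_c(X,-)$ and both sides of (\ref{eqsig}) are additive in $M$ and $\chi_c$ is unchanged by enlarging the coefficient field, by Brauer's theorem (surjectivity of the decomposition map) we may assume, after possibly enlarging $\Lambda$, that $M=\overline N$ is the reduction modulo $\ell$ of a ${\mathbf Z}_\ell$‑lattice $N$ in an $\ell$‑adic representation of $G$. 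For such $M$, the exact triangle $\mathcal{F}_N\xrightarrow{\ \ell\ }\mathcal{F}_N\to\mathcal{F}_{\overline N}\to$ together with the identity $\dim_{{\mathbf F}_\ell}(H/\ell)-\dim_{{\mathbf F}_\ell}(H[\ell])=\mathrm{rank}_{{\mathbf Z}_\ell}H$ gives $\chi_c(X,\mathcal{F}_{\overline N})=\chi_c(X,\mathcal{F}_{N\otimes{\mathbf Q}_\ell})$, and since $W\to X$ is a $G$‑torsor we have $H^*_c(X,\mathcal{F}_{N\otimes{\mathbf Q}_\ell})=(H^*_c(W,{\mathbf Q}_\ell)\otimes(N\otimes{\mathbf Q}_\ell))^G$ compatibly with $G$, whence
\begin{equation}
\chi_c(X,\mathcal{F}_{\overline N})=\frac1{|G|}\sum_{\sigma\in G}\mathrm{Tr}(\sigma\colon H^*_c(W,{\mathbf Z}_\ell))\cdot\mathrm{Tr}(\sigma\colon N).
\label{eqlefplan}
\end{equation}

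The key step is the vanishing $\mathrm{Tr}(\sigma\colon H^*_c(W,{\mathbf Z}_\ell))=0$ for $\sigma\notin S$. Write $\sigma=\sigma_p\sigma_{p'}$ as a product of commuting elements of $p$‑power order and of order prime to $p$. If $\sigma_{p'}\neq1$, then $W^{\sigma_{p'}}=\varnothing$ because $G$ acts freely on $W$, so the reduction of the trace to the fixed locus of the prime‑to‑$p$ part gives $\mathrm{Tr}(\sigma\colon H^*_c(W,{\mathbf Z}_\ell))=\mathrm{Tr}(\sigma_p\colon H^*_c(W^{\sigma_{p'}},{\mathbf Z}_\ell))=0$. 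If $\sigma$ has $p$‑power order and lies in no inertia group at a geometric point of $\overline X$, then $\overline W^{\sigma}=\varnothing$, since a fixed geometric point of $\overline W$ has stabilizer containing $\sigma$; as $\overline W$ is proper, the Lefschetz--Verdier formula gives $\mathrm{Tr}(\sigma\colon H^*(\overline W,{\mathbf Q}_\ell))=0$, and then $\mathrm{Tr}(\sigma\colon H^*_c(W,{\mathbf Q}_\ell))=0$ by excision along the proper $\sigma$‑stable complement $\overline W\setminus W$ and induction on dimension. Both points are contained in \cite[3.3]{DL}. Consequently the sum in (\ref{eqlefplan}) runs over $\sigma\in S$; as each such $\sigma$ has order prime to $\ell$, we may replace $\mathrm{Tr}(\sigma\colon N)$ by the Brauer trace $\mathrm{Tr}^{Br}(\sigma\colon\overline N)$ and obtain
\begin{equation}
\chi_c(X,\mathcal{F}_{\overline N})=\frac1{|G|}\sum_{\sigma\in S}\mathrm{Tr}(\sigma\colon H^*_c(W,{\mathbf Z}_\ell))\cdot\mathrm{Tr}^{Br}(\sigma\colon\overline N).
\label{eqbrplan}
\end{equation}

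To conclude, let $F\subset\Frac(W(\Lambda))$ be the (finite, cyclotomic) field generated by all $p$‑power roots of unity occurring as liftings of eigenvalues of elements of $S$ on $\overline N$, and put $\Gamma=\Gal(F/{\mathbf Q})$; for $\gamma\in\Gamma$ corresponding to an integer $c$ prime to $p$ one has $\gamma(\mathrm{Tr}^{Br}(\sigma\colon\overline N))=\mathrm{Tr}^{Br}(\sigma^{c}\colon\overline N)$. By assertion~1 the function $\sigma\mapsto\mathrm{Tr}(\sigma\colon H^*_c(W,{\mathbf Z}_\ell))$ is ${\mathbf Z}$‑valued, hence, being a virtual character, is constant on the classes $\{\sigma,\sigma^{c}\}$ by \cite[\S13.1]{Se}; moreover $S$ is stable under $\sigma\mapsto\sigma^{c}$. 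Re‑indexing the right‑hand side of (\ref{eqbrplan}) by $\sigma\mapsto\sigma^{c}$ therefore shows that $\frac1{|G|}\sum_{\sigma\in S}\mathrm{Tr}(\sigma\colon H^*_c(W,{\mathbf Z}_\ell))\cdot\gamma(\mathrm{Tr}^{Br}(\sigma\colon\overline N))=\chi_c(X,\mathcal{F}_{\overline N})$ for every $\gamma\in\Gamma$. Averaging over $\Gamma$ and using $\frac1{|\Gamma|}\sum_{\gamma\in\Gamma}\gamma(x)=\frac1{[E:{\mathbf Q}]}\mathrm{Tr}_{E/{\mathbf Q}}(x)$ for $x$ in a subfield $E\subset F$, we get
\begin{equation}
\chi_c(X,\mathcal{F}_{\overline N})=\frac1{|G|}\sum_{\sigma\in S}\mathrm{Tr}(\sigma\colon H^*_c(W,{\mathbf Z}_\ell))\cdot\frac1{[E_\sigma:{\mathbf Q}]}\mathrm{Tr}_{E_\sigma/{\mathbf Q}}\mathrm{Tr}^{Br}(\sigma\colon\overline N),
\notag
\end{equation}
with $E_\sigma\subset F$ a finite subextension of ${\mathbf Q}$ containing $\mathrm{Tr}^{Br}(\sigma\colon\overline N)$. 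By Lemma \ref{lmsig} the inner factor equals $\frac1{p-1}(p\cdot\dim\overline N^{\sigma}-\dim\overline N^{\sigma^p})$, which is (\ref{eqsig}) for $M=\overline N$; by additivity of both sides in $M$, the formula holds for every $M$.

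The main obstacle is the vanishing step: the case of $\sigma$ of $p$‑power order relies on the Lefschetz--Verdier formula on the proper scheme $\overline W$ — where the delicate local terms do not arise at all because the fixed‑point set is empty — together with the excision/induction argument, and one also needs the reduction to the $p$‑part for general $\sigma$; both are precisely the input taken from \cite[3.3]{DL}. Once assertion~1 and Lemma \ref{lmsig} are granted, the remaining manipulations (the reduction to liftable $M$, the trace formula (\ref{eqlefplan}), and the Galois averaging) are formal.
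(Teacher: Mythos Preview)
Your proof is correct and follows essentially the same route as the paper: establish the Brauer-trace identity $\chi_c(X,\mathcal F)=\frac1{|G|}\sum_{\sigma\in S}\mathrm{Tr}(\sigma\colon H^*_c(W,\mathbf Z_\ell))\cdot\mathrm{Tr}^{Br}(\sigma,M)$ together with the vanishing of the cohomological traces outside $S$ (both taken from \cite{il},\cite{DL}), and then convert $\mathrm{Tr}^{Br}(\sigma,M)$ into $\frac1{p-1}(p\dim M^\sigma-\dim M^{\sigma^p})$ via Lemma~\ref{lmsig}. The paper's write-up is shorter in two places: it quotes \cite[Lemme~2.5]{il} for the Brauer-trace identity directly (so your lifting of $M$ to a lattice $N$ is unnecessary, and the ``$\mathbf Z_\ell$-lattice'' should really be a $W(\Lambda)$-lattice), and for the last step it simply applies $\frac1{[E:\mathbf Q]}\mathrm{Tr}_{E/\mathbf Q}$ to both sides, using only that $\chi_c(X,\mathcal F)$ and the $\mathrm{Tr}(\sigma\colon H^*_c)$ are rational---this bypasses your re-indexing $\sigma\mapsto\sigma^c$ and the appeal to constancy on $\mathbf Q$-conjugacy classes.
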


\proof{
The proof is based on that of
\cite[Th\'eor\`eme 2.1]{il}.
By the proof of \cite[Lemme 2.5]{il},
we have
${\rm Tr}(\sigma: H^*_c(W,{\mathbf Z}_\ell))=0$  
for $\sigma\notin S\subset G$ and
we have
$$\chi_c(X,{\cal F})=
\frac1{|G|}
\sum_{\sigma\in S}
{\rm Tr}(\sigma: H^*_c(W,{\mathbf Z}_\ell))
\cdot
{\rm Tr}^{Br}(\sigma,M).$$
Since $\chi_c(X,{\cal F})$ is
an integer
and ${\rm Tr}(\sigma: H^*_c(W,{\mathbf Z}_\ell))$
are integers for $\sigma\in G$,
by taking a subfield $E$ of
the fraction field of $W(\Lambda)$
of finite degree over ${\mathbf Q}$
containing ${\rm Tr}^{Br}(\sigma, M)$
for $\sigma\in S$
and applying Lemma \ref{lmsig},
we obtain (\ref{eqsig}).
\qed}

\proof[Proof of Proposition {\rm \ref{prl}}]{
We may assume that $k$ is algebraically closed.
By Lemma \ref{lmsw} and
devissage, we may
assume that $X$ is normal and affine
and that
${\cal F}$ and ${\cal F}'$
are locally constant constructible sheaves
satisfying the condition in Definition \ref{dfP}.1.
If $k$ is of characteristic $0$,
we have
$\chi_c(X_{\bar k},{\cal F})=
{\rm rank }\ {\cal F}\cdot
\chi_c(X_{\bar k},\Lambda)$
by
\cite[Th\'eor\`eme 2.1]{il}
and similarly for ${\cal F}'$
and the assertion follows.

Assume
$k$ is of characteristic $p>0$
and let $\bar X$ be a proper normal
scheme containing $X$
as a dense open subscheme
and satisfying the condition in loc.\ cit.
Then, Lemma \ref{prsig}.2 implies
$\chi_c(X_{\bar k},{\cal F})=
\chi_c(X_{\bar k},{\cal F}').$
\qed}

\begin{cor}\label{coruchi}
Let $X$ be a scheme of finite type over a field $k$.
Let $\Lambda$ and $\Lambda'$
be finite fields of
characteristic %$\ell$ and $\ell'$
invertible in $k$.
Let ${\cal F}$ and ${\cal F}'$
be constructible complexes
of $\Lambda$-modules
and 
of $\Lambda'$-modules
on $X$ with the same wild ramification.
Then, ${\cal F}$ and ${\cal F}'$
have universally the same Euler-Poincar\'e
characteristics.
\end{cor}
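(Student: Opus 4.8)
The plan is to deduce the statement from Proposition \ref{prl} and Lemma \ref{lmsw} after base change to $\bar k$. Let $Z$ be a separated scheme of finite type over $k$ and $g\colon Z\to X$ a morphism over $k$, write $g_{\bar k}\colon Z_{\bar k}\to X_{\bar k}$ for the base change and set $\bar{\cal F}={\cal F}|_{X_{\bar k}}$, $\bar{\cal F}'={\cal F}'|_{X_{\bar k}}$. Since $(g^*{\cal F})|_{Z_{\bar k}}=g_{\bar k}^*\bar{\cal F}$ and likewise for ${\cal F}'$, what must be proved, by Definition \ref{dfEP}, is $\chi_c(Z_{\bar k},g_{\bar k}^*\bar{\cal F})=\chi_c(Z_{\bar k},g_{\bar k}^*\bar{\cal F}')$. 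Granting for the moment that $\bar{\cal F}$ and $\bar{\cal F}'$ have the same wild ramification over $\bar k$, Lemma \ref{lmsw} applied to $g_{\bar k}$ over $\bar k$ shows that $g_{\bar k}^*\bar{\cal F}$ and $g_{\bar k}^*\bar{\cal F}'$ have the same wild ramification over $\bar k$, and then Proposition \ref{prl} yields the required equality. Thus everything reduces to the claim that ``having the same wild ramification'' is preserved by the base change along $\bar k/k$.

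To prove this claim I would argue by devissage. Using Definition \ref{dfP}.2, choose a finite partition $X=\coprod_iX_i$ into normal separated locally closed subschemes so that for every $q$ and $i$ the restrictions ${\cal H}^q({\cal F})|_{X_i}$ and ${\cal H}^q({\cal F}')|_{X_i}$ are locally constant constructible with the same wild ramification in the sense of Definition \ref{dfP}.1. Base changing and then refining each $X_{i,\bar k}$ into smooth --- hence normal and separated --- locally closed subschemes over $\bar k$, one gets a partition $X_{\bar k}=\coprod_jW_j$ on whose members all the cohomology sheaves of $\bar{\cal F}$ and of $\bar{\cal F}'$ are locally constant constructible; since ${\cal H}^q(\bar{\cal F})|_{W_j}$ is the pull-back along $W_j\hookrightarrow X_{i,\bar k}\to X_i$ of ${\cal H}^q({\cal F})|_{X_i}$, and likewise for ${\cal F}'$, it is enough to treat each $W_j$ separately. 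As $W_j$ is of finite type over $\bar k$, it is the base change to $\bar k$ of a smooth separated scheme $W_{j,1}$ of finite type over a finite subextension $k_1\subset\bar k$ of $k$, and, after enlarging $k_1$, the morphism $W_j\to X_i$ above is the base change of a $k$-morphism $W_{j,1}\to X_i$ of schemes of finite type over $k$; by Lemma \ref{lmsw}, the pull-backs of ${\cal H}^q({\cal F})|_{X_i}$ and ${\cal H}^q({\cal F}')|_{X_i}$ to $W_{j,1}$ have the same wild ramification over $k$. So one is reduced to the stability of ``same wild ramification'' under the base change from $W_{j,1}$ to $(W_{j,1})_{\bar k}$, with source geometrically normal over $k_1$.

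This last, local statement is the point where I expect the real work to lie. The extension $\bar k/k_1$ is the composite of a pro-(finite \'etale) extension and a purely inseparable one, and under both of these base changes the \'etale site --- hence strict localizations, inertia subgroups, and their representations on stalks --- is unchanged; moreover the condition (W) is vacuous at a geometric point whose inertia subgroup is trivial, in particular at every geometric point of the open dense part $(W_{j,1})_{\bar k}$. The remaining issue is the boundary: fixing a proper normal scheme $\overline W_{j,1}$ over $k_1$ containing $W_{j,1}$ as a dense open subscheme and satisfying (W), one passes to the normalization $\overline W$ of $\overline W_{j,1}\times_{k_1}\bar k$, which contains $(W_{j,1})_{\bar k}$ as a dense open subscheme because the latter is normal, and one must verify (W) at each geometric point of $\overline W$. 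The inertia subgroup at such a point maps to the inertia subgroup at its image in $\overline W_{j,1}$, with compatible monodromy representations, so the task is to check that the equality $\dim M^\sigma=\dim M^{\prime\sigma}$ for $\sigma$ of $p$-power order is inherited under this homomorphism; here I would use the reformulation of (W), via Lemma \ref{lmsM}, as the equalities $\dim e_iM=\dim e_iM'$ for the primitive idempotents $e_i$ of the center of ${\mathbf Z}[P][\frac1p]$ for a $p$-Sylow subgroup $P$, which behaves well under restriction of the group along such maps. With this local invariance established, the corollary follows by combining it with Lemma \ref{lmsw} and Proposition \ref{prl} as in the first paragraph.
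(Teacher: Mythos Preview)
Your argument follows the same route as the paper's --- apply Lemma~\ref{lmsw} to pull back along $g$, then invoke Proposition~\ref{prl} --- and the paper's own proof is literally the one sentence ``By Lemma~\ref{lmsw}, it follows from Proposition~\ref{prl}.'' The difference is that you devote almost all of your effort to the passage from $k$ to $\bar k$: Proposition~\ref{prl} is stated for algebraically closed $k$, so one must know either that ``same wild ramification'' survives base change to $\bar k$, or that the proof of Proposition~\ref{prl} goes through verbatim over general $k$ with cohomology taken over $\bar k$. The paper does not spell this out; the first line of its proof of Proposition~\ref{prl} (``We may assume that $k$ is algebraically closed'') is redundant as written and suggests the authors regard the reduction as routine, essentially because \'etale cohomology with compact support and strict localizations are computed after base change to $\bar k$ anyway, and the data entering condition~(W) --- inertia groups at geometric points and the $G$-module $M$ --- are already geometric.

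Your outline for this reduction is reasonable but, as you yourself flag, not complete: the step comparing inertia groups at geometric points of the normalization of $\overline{W}_{j,1}\times_{k_1}\bar k$ with those of $\overline{W}_{j,1}$ is only sketched, and the appeal to the idempotent reformulation via Lemma~\ref{lmsM} does not by itself finish the comparison. If you want a clean argument, it is simpler not to transport the \emph{definition} across base change at all, but to observe that the formula of Lemma~\ref{prsig}.2 computes $\chi_c$ over $\bar k$ from a $G$-torsor and compactification already chosen over $k$; the right-hand side involves only $\dim M^{\sigma}$ for $\sigma$ in inertia at geometric points of the given $\bar X$, which is exactly what condition~(W) over $k$ controls. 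That is presumably what the paper has in mind.
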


\proof{
By Lemma \ref{lmsw},
it follows from
Proposition \ref{prl}.
\qed}

\proof[Proof of Theorem {\rm \ref{thml}}]{
By Corollary \ref{coruchi},
it follows from
Proposition \ref{prccCC}.
\qed}
\medskip

The definition of the property
having the same wild ramification
is different from that studied in
\cite[Definition 2.3.1]{vi}.
%Thus, Proposition {\rm \ref{prl}} in this case may be regarded as a special case of \cite[Corollary 3.3]{vi}.
It may be also interesting to
consider a generalization 
to algebraic spaces as in
\cite[Section 5]{iz}.

In this note,
we formulated the independence
of $\ell$ in terms of wild inertia.
At least if $k$ is finite,
one can replace this
by the condition on
the traces of Frobenius
as in \cite{Fu}, \cite{ZW}
by using the Chebotarev density theorem.
W.\ Zheng further suggested to
consider the subgroup
of the Grothendieck group
of $E$-compatible systems of
constructible complexes
\cite[D\'efinition 1.14]{ZW}
consisting of classes of virtually
trivial wild ramification
as in \cite[Definition 2.3.1]{vi}
and extend the results of \cite{vi}
to this framework.
L.\ Illusie suggested that
one can also prove a statement
 analogous to Theorem \ref{thml}
for the singular support.
The authors thank them
for the remarks.

%%%%%%%%%%%%%%%%%%%%%%%%%%%%%%%%%%%%%%%%%%%%%%%%%%%%%%%%%%%%%%%%%%%%%%

\end{document}